\DeclareMathOperator{\card}{Card}
\DeclareMathOperator{\con}{Con}
\DeclareMathOperator{\free}{free}
\newcommand {\disjcup} {\sqcup}
\newcommand {\disjbigcup} {\bigsqcup}
\newcommand{\nor}{\downarrow}
\newcommand{\N}{\mathbb{N}}
\newcommand{\Z}{\mathbb{Z}}
\newcommand{\extprod}[1][\cdot]{#1}
\newcommand{\cartprod}{\times}
\newcommand{\rem}[1]{}
\newcommand{\true}{\top}
\newcommand{\false}{\perp}
\newcommand{\truthvalueset}{\left\{ \true, \false \right\}}
\newcommand{\st}{|}
\newcommand{\powset}[1]{2^{#1}}
\newcommand{\informal}[1]{#1}
\newcommand{\proves}{\vdash}
\theoremstyle{plain}
\title[Yet another proof of G\"odel's completeness theorem]{Yet another proof of G\"odel's completeness theorem for first-order classical logic}
\date{}
\thanks{}
\subjclass[2000]{Primary 03C07, Secondary 65D07}
\keywords{completeness theorem, first-order logic, satisfiability, model theory, henkin theorem, sequent calculus, proof theory}
\dedicatory{}
\begin{document}

\maketitle

\begin{abstract}
A Henkin-style proof of completeness of first-order classical logic is given with respect to a very small set (notably missing cut rule) of Genzten deduction rules for intuitionistic sequents.
\comment{\\
The attention is focused on using only effectively needed rules for each step in the proof, so as to be able to discern between the requirements of results usually taken for equivalent.} 
\\
Insisting on sparing on derivation rules, satisfiability theorem is seen to need weaker assumptions than completeness theorem, the missing request being exactly the rule $ \lnot \lnot \varphi \rightarrow \varphi$, which gives a hint of intuitionism's motivations from a classical point of view.
\\
A bare treatment of standard, basic first-order syntax somehow more algebraic-flavoured than usual is also given.
\end{abstract}

\section{Introduction}
Some twenty years after G\"odel gave the first proof of its completeness result (see \cite{van1967frege} for an english translation of the original paper), Henkin (\cite{henkin1949completeness}) found a substantially different strategy to prove it, giving an alternative approach which has become standard in the exposition of the subject, see for example \cite{ebbinghaus1994mathematical}.

The latter treatment inspired the present one. 
In particular the proof mechanism of \ref{TeoSatisfiability} is unchanged. 
\\
What has been changed is the derivation of \ref{PropSatisfiability}, which is the main result; it is given in a quite different way, attentive to the economy of derivation rules. 
As a consequence, it is shown that satisfiability theorem, which is the key for completeness in all the arguments known to author (G\"odel's included), requires one rule less than completeness.
\\
In other words, the rule set large enough to prove any true statement has one rule more than the rule set large enough that it can be enlarged without making any consistent set of formula inconsistent: compare \ref{PropCompletenessWRTConsistency} with \ref{TheoCompleteness}.  
\\
And anyway, the completeness theorem can inherently be proven according to a set of basic deduction rules (listed in \ref{TenRules}) which lacks ``Proof by cases'' rule, and then cut-rule, which is derived from it (see \cite{ebbinghaus1994mathematical}, IV.3.2), right from the start.
\\
As a byproduct, this led to a careful reformulation of consistency-related definitions, and to a generalization of the construction of Henkin for the model of a given consistent set of formula to the generic free interpretation and quotient construction (\eqref{DefnFreeInterpretation} and \eqref{QuotientInterpretation}).
\\
Further, a reformulation of first-order logic syntax and semantics in term of semigroups is given in \ref{SectSyntax} and \ref{SectSemantics}. 
A plus of this approach is that definitions and some proofs get shorter and compact, albeit arguably obscuring the intuitive meaning of things, which is what standard treatment is based on.
\\
All these addictional results are relegated to appendix so as not to obscure the main result.
\\
For yet alternative proofs of this same cornerstone result, the reader can see \cite{enderton1972mathematical} or \cite{smullyan1995first}.
\section{Sequents}
\lnote{Definire piu' precisamente cosa e' un sequente e cosa una dimostrazione} \rnote{A seconda di come lo fai, vedi se aggiungere weakening e contraction alle regole}
In sections \ref{SectSyntax} and \ref{SectSemantics}, syntax and semantics have been formalized. Here a formalization of deductive proofing is set up. All along the tractation of completeness theorem, we fix the symbols set $S$ and the variable symbols set $X$ and explicitly require them both to be at most countable.
\subsection{Basic derivation rules}
\label{TenRules}
\xdef\counter{0}
\FPeval{counter}{clip(\counter{} - 1)}
\xdef\counter{\FPprint\counter}
Let $\Gamma \subseteq F^S$ be finite.
\subsubsection{Assumption}
\label{AssRule}
\FPeval{counter}{clip(\counter{}) + 1}
\xdef\counter{\FPprint\counter}
\FPeval{AssRuleCounter}{clip(\counter{})}
\xdef\AssRuleCounter{\FPprint\AssRuleCounter}
Rule number \FPprint\AssRuleCounter
\begin{align*}
\begin{aligned}
\\
\hline
\Gamma &&  \varphi
\end{aligned}
&& \text{where} \left\{ \varphi \right\} \subseteq \Gamma
\end{align*}
\subsubsection{Antecedent}
\label{AntRule}
\FPeval{counter}{\counter{} + 1}
\xdef\counter{\FPprint\counter}
\FPeval{AntRuleCounter}{clip(\counter{})}
\xdef\AntRuleCounter{\FPprint\AntRuleCounter}
Rule number \FPprint\AntRuleCounter
\begin{align*}
\begin{aligned}
\Gamma && \varphi
\\
\hline
\Gamma' && \varphi
\end{aligned}
&& \text{where } \Gamma \subseteq \Gamma'
\end{align*}
\subsubsection{Reflexivity of Equality}
\label{ReflRule}
\FPeval{counter}{\counter{} + 1}
\xdef\counter{\FPprint\counter}
\FPeval{ReflRuleCounter}{clip(\counter{})}
\xdef\ReflRuleCounter{\FPprint\ReflRuleCounter}
Rule number \FPprint\ReflRuleCounter
\begin{align*}
\begin{aligned}
\\
\hline
\equiv t t
\end{aligned}
\end{align*}
\subsubsection{Substitution through Equality}
\label{SubstRule}
\FPeval{counter}{\counter{} + 1}
\xdef\counter{\FPprint\counter}
\FPeval{SubstRuleCounter}{clip(\counter{})}
\xdef\SubstRuleCounter{\FPprint\SubstRuleCounter}
Rule number \FPprint\SubstRuleCounter
\begin{align*}
\begin{aligned}
\Gamma && && \varphi\tfrac{t}{x}
\\
\hline
\Gamma && \equiv t t' && \varphi\tfrac{t'}{x}
\end{aligned}
\end{align*}
\subsubsection{Quantifier Introduction in succedent}
\label{ESRule}
\FPeval{counter}{\counter{} + 1}
\xdef\counter{\FPprint\counter}
\FPeval{ESRuleCounter}{clip(\counter{})}
\xdef\ESRuleCounter{\FPprint\ESRuleCounter}
Rule number \FPprint\ESRuleCounter
\begin{align*}
\begin{aligned}
\Gamma && \varphi \tfrac{t}{x}
\\
\hline
\Gamma &&  \exists x \varphi
\end{aligned}
\end{align*}
\subsubsection{Quantifier Introduction in antecedent}
\label{EARule}
\FPeval{counter}{\counter{} + 1}
\xdef\counter{\FPprint\counter}
\FPeval{EARuleCounter}{clip(\counter{})}
\xdef\EARuleCounter{\FPprint\EARuleCounter}
Rule number \FPprint\EARuleCounter
\begin{align*}
\begin{aligned}
\Gamma && \varphi \tfrac{x'}{x} && \psi
\\
\hline
\Gamma &&  \exists x \varphi && \psi
\end{aligned}
&&
\text{where } x' \in X \setminus \free \left( \Gamma \cup \left\{ \exists x \varphi, \psi \right\} \right)
\end{align*}
\subsubsection{NOR introduction}
\label{NorRule}
\FPeval{counter}{\counter{} + 1}
\xdef\counter{\FPprint\counter}
\FPeval{NorRuleCounter}{clip(\counter{})}
\xdef\NorRuleCounter{\FPprint\NorRuleCounter}
Rule number \FPprint\NorRuleCounter
\begin{align*}
\begin{aligned}
\Gamma && \nor \varphi_1 \varphi_1
\\
\Gamma && \nor \varphi_2 \varphi_2
\\
\hline
\Gamma &&  \nor \varphi_1 \varphi_2
\end{aligned}
\end{align*}
\subsubsection{NOR simmetry}
\label{NorCommRule}
\FPeval{counter}{\counter{} + 1}
\xdef\counter{\FPprint\counter}
\FPeval{NorCommRuleCounter}{clip(\counter{})}
\xdef\NorCommRuleCounter{\FPprint\NorCommRuleCounter}
Rule number \FPprint\NorCommRuleCounter
\begin{align*}
\begin{aligned}
\Gamma && \nor \varphi_1 \varphi_2
\\
\hline
\Gamma &&  \nor \varphi_2 \varphi_1
\end{aligned}
\end{align*}
\subsubsection{Contradiction +}
\label{PCtrRule}
\FPeval{counter}{\counter{} + 1}
\xdef\counter{\FPprint\counter}
\FPeval{PCtrRuleCounter}{clip(\counter{})}
\xdef\PCtrRuleCounter{\FPprint\PCtrRuleCounter}
Rule number \FPprint\PCtrRuleCounter
\begin{align*}
\begin{aligned}
\Gamma && \varphi && \psi
\\
\Gamma && \varphi && \nor \psi \phi
\\
\hline
\Gamma &&  && \nor \varphi \varphi
\end{aligned}
\end{align*}
\subsubsection{Contradiction -}
\label{NCtrRule}
\FPeval{counter}{\counter{} + 1}
\xdef\counter{\FPprint\counter}
\FPeval{NCtrRuleCounter}{clip(\counter{})}
\xdef\NCtrRuleCounter{\FPprint\NCtrRuleCounter}
Rule number \FPprint\NCtrRuleCounter
\begin{align*}
\begin{aligned}
\Gamma && \nor \varphi \varphi && \psi
\\
\Gamma && \nor \varphi \varphi && \nor \psi \phi
\\
\hline
\Gamma &&  && \varphi
\end{aligned}
\end{align*}
\subsection{Correctness. Compact notation for rules selection}
The rules given in \ref{TenRules} have a role similar to that of construction rules for $F^S$ and $T^S$ seen in \ref{TermsAndFormulas}: the former
decree which sequents are admitted in proofs, the latter ruled what strings of symbols are accepted to make computations on, and were built with syntactical conformity in mind, so to filter exactly the formulas that other authors call well-formed to emphasize the point.
Correspondingly, the first requisite one expects from derivation rules is that any formula derived from true assumption must be true as well, i.e. that basic derivation rules are correct. 
Checking that this is indeed the case for our basic rules is straightforward but tedious; let us just state a fundamental consequence:
\begin{teo}[Consistency theorem]
\label{TheoConsistency}
Any satisfiable set of formulas is consistent.
\end{teo}
Formal definition of consistency is to come in \ref{SectionConsistency}, anyway this should not pose a problem for some informal motivational discussion.
The proof of \ref{TheoConsistency} is much easier than that of its reverse (satisfiability theorem, \ref{TeoSatisfiability}), which is done below.
A first symptom of the trickiness involved: if one looks at \ref{TheoConsistency}'s statement, there's no ambiguity in it, in that the declared consistency holds irrespective of the deduction rules adopted; one can add or subtract arbitrary deduction rules and, provided they are correct, \ref{TheoConsistency} stands still.

On the other hand, if faced with statement ``Any consistent set of formulas is satisfiable'', one may well, and should, ask: ``Consistent according to what derivation rules?''.
\\
We shall later see, indeed,  that every basic rule (except \ref{NCtrRule} which is used only for the easy derivation, from \ref{TeoSatisfiability}, of the stronger completeness theorem, \ref{TheoCompleteness}) is needed along the proof, and this results in having to get fussy about always specifying according to which rules one is deriving.
\\
In particular, satisfiability theorem need all-around consistency for its thesis to hold, and is not granted to stand for weaker-consistent sets of formulas, simply because of the inability of finding some proof dispensing one of the basic rules.

So we need a compact way to always specify which rules are assumed at each stage of the work. The letter $D$ will denote the generic subset of the set of basic rules provided in \ref{TenRules}.
\\
Having enumerated each member of the latter permits to bijectively denote each member of its power set with a binary string of \FPeval{var}{clip(\counter{}+1)}\FPprint\var{} digits. That is, each subset of basic rules can be associated with a natural number less than{}\FPeval{var}{round(2^(\counter{}+1):0)}\FPprint\var{}.
\\
\FPeval{RuleChoice}{round(2^\AssRuleCounter + 2^\ReflRuleCounter + 2^\NorRuleCounter:0)}
For example the rule set 
\begin{align*}
D:= \left\{ \text{\nameref{ReflRule}} , \text{\nameref{NorRule}}
, \text{\nameref{AssRule}}
\right\}
\end{align*}
can be identified with the number \FPprint\RuleChoice.
\\
\FPeval{RuleChoice}{round(2^\AssRuleCounter + 2^\AntRuleCounter:0)}
With a handy notational abuse, one can thus write relations like $D \supseteq \FPprint\RuleChoice$ to compactly state that he is considering a set of rules $D$ including at least \nameref{AssRule} and \nameref{AntRule}.
\\
Finally, $\overline{D}$ will be the set of all the rules derivable given the starting rule set $D$.
\subsection{Some derivable rules}
\FPeval{RuleChoice}{round(2^AntRuleCounter + 2^AssRuleCounter + 2^PCtrRuleCounter :0)}
Here are some derivable rules for our convenience in future arguments.
Their derivation is omitted in most cases, being short and easy.
\\
If $D \supseteq \FPprint\RuleChoice$ then $\overline{D}$ contains the following rule:
\FPeval{RuleChoice}{round(2^AntRuleCounter + 2^AssRuleCounter + 2^PCtrRuleCounter + 2^NorCommRuleCounter :0)}
\begin{align*}
\begin{aligned}
\Gamma && \nor \varphi_1 \varphi_2
\\
\hline
\Gamma&&  \nor \varphi_1 \varphi_1
\end{aligned}
&& \text{ NOT introduction}
\end{align*}
If $D \supseteq \FPprint\RuleChoice$ then $\overline{D}$ contains the following rule:
\FPeval{RuleChoice}{round(2^AntRuleCounter + 2^AssRuleCounter + 2^NCtrRuleCounter :0)}
\begin{align*}
\begin{aligned}
\Gamma && \nor \varphi_1 \varphi_2
\\
\hline
\Gamma&&  \nor \varphi_2 \varphi_2
\end{aligned}
&& \text{ NOT introduction}
\end{align*}
If $D \supseteq \FPprint\RuleChoice$ then $\overline{D}$ contains the following rule:
\FPeval{var}{round( 2^AssRuleCounter + 2^AntRuleCounter  + 2^PCtrRuleCounter :0)}
\begin{align}
\label{NotNotRemove}
\begin{aligned}
\Gamma && \lnot \lnot \varphi
\\
\hline
\Gamma&&  \varphi
\end{aligned}
\end{align}
If $D \supseteq \FPprint\RuleChoice$ or $D \supseteq \FPprint\var$  then $\overline{D}$ contains the following rule:
\begin{align*}
\begin{aligned}
\Gamma &&  \varphi
\\
\hline
\Gamma&&  \lnot \lnot \varphi
\end{aligned}
\end{align*}
\begin{proof}
We do the case $D \supseteq \FPprint\RuleChoice$: the sequent $ \Gamma \varphi$ is given, and $\Gamma \lnot \lnot \lnot \varphi \lnot \lnot \lnot \varphi$ is introduced by \ref{AssRule}. Apply respectively \ref{AntRule} and \eqref{NotNotRemove} to trigger this derivation:
\begin{align*}
\begin{aligned}
\Gamma && \lnot \lnot \lnot \varphi && \varphi
\\
\Gamma && \lnot \lnot \lnot \varphi && \lnot \varphi
\\
\hline
\Gamma && && \lnot \lnot \varphi
\end{aligned}
&& \text{by } \eqref{NCtrRule}
\end{align*}
\end{proof}
\subsection{First properties of sequents. Consistency}
\label{SectionConsistency}
We continue to use $D$ to indicate a generic subset of basic rules, and study the scope of derivations when varying which rules we admit in $D$.
\begin{defn}
\label{DefExpansion}
The letter $D$ shall be overloaded to denote with $D \left( \Phi \right)$ the logical expansion of $\Phi$ according to $D$, that is the set of formulas
\begin{align*}
D\left( \Phi \right) := \left\{ \varphi \in F^S \st \Phi \proves_{D} \varphi \right\}
\end{align*}
$\Phi$ is said to be $D$-expanded iff $ D\left( \Phi \right) \subseteq \Phi$.
\end{defn}
Obviously $\overline{D} \left( \Phi \right) = D\left( \Phi \right)$, since derivable rules are just handy placeholders replaceable with a full derivation, similar to the concept of macro in computer science.

The following definitions characterize a set of formulas according to its logical properties, and complete the definitions given in \ref{SectFamiliesOfFormulas}, which pursued the same task in a sheer syntactical manner.
\begin{defn}
One says that $\Phi$ is $D$-consistent, and writes $\con_{D}{\Phi}$, iff $\forall \varphi \in F^{S}, \left\{ \psi \st \Phi \proves_{D} \psi \right\} \cap \left\{ \varphi, \nor \varphi \varphi \right\}$ has cardinality not exceeding $1$.
Equivalently, $D \left( \Phi \right)$ is not patently inconsistent.
\end{defn}
\begin{defn}
$\Phi$ is said to be $D$-maximal iff it is a $D$-consistent covering.
\end{defn}
In making use of above definitions we shall often elide the $D$ letters when there is no reasonable ambiguity possible.
\begin{lemma}
\label{LemmaMaximalCovering}
If $D$ includes assumption rule, then 
\begin{enumerate}
\item
$\Phi$ is maximal $\Leftrightarrow \Phi$ is a minimal covering and $\Phi$ is expanded.
\item
$\Phi$ is maximal $ \Rightarrow \left( \con \Phi \text{ and } \left( \con \Phi \cup \left\{ \varphi \right\} \Rightarrow \varphi \in \Phi \right) \right) $
\item
$\Phi \subseteq D\left( \Phi \right)$
\end{enumerate}
\end{lemma}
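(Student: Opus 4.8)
The plan is to prove the three claims of Lemma~\ref{LemmaMaximalCovering} more or less in the order in which they most naturally support each other, starting from the definitions unwound carefully. Recall that $D$-maximality means $D$-consistent \emph{and} a covering, where ``covering'' should mean that for every $\varphi \in F^S$ at least one of $\varphi$, $\nor \varphi \varphi$ is derived (the complement of consistency's ``at most one''). Throughout I would use freely that $D$ contains \ref{AssRule}, which is exactly what is needed to push $\Phi$ inside its own expansion.

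First I would dispatch item~(3), $\Phi \subseteq D(\Phi)$, since it is the cheapest and is used implicitly in the others. This is immediate from \ref{AssRule}: for any $\varphi \in \Phi$ the single-step derivation with empty hypotheses above the bar yields $\Gamma \vdash_D \varphi$ with $\{\varphi\} \subseteq \Gamma \subseteq \Phi$, so $\varphi \in D(\Phi)$ by Definition~\ref{DefExpansion}. (Here I am reading the derivability relation $\proves_D$ as using finite subsets $\Gamma$ of $\Phi$, consistently with the finiteness requirement on $\Gamma$ in \ref{TenRules}.)

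Next I would prove item~(1), the characterization of maximality. For the forward direction, assume $\Phi$ is maximal, i.e.\ consistent and a covering. ``Minimal covering'' I would interpret as a covering in which, for each $\varphi$, \emph{exactly} one of $\varphi,\nor\varphi\varphi$ is derivable --- but minimality as a covering really just repackages consistency (at most one) together with the covering property (at least one), so the content to establish is ``expanded'', i.e.\ $D(\Phi) \subseteq \Phi$. The key step is: if $\varphi \in D(\Phi)$, then $\Phi \proves_D \varphi$, so by consistency $\nor\varphi\varphi \notin D(\Phi)$; but $\Phi$ is a covering, so one of $\varphi,\nor\varphi\varphi$ must already lie in $\Phi$, and it cannot be $\nor\varphi\varphi$ (that would, via \ref{AssRule} and item~(3), put $\nor\varphi\varphi$ into $D(\Phi)$, contradicting consistency), hence $\varphi \in \Phi$. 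For the converse, minimal covering gives the covering half and consistency half of maximality directly, so there is essentially nothing to add beyond unwinding definitions; ``expanded'' is not even needed for this direction.

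Finally item~(2) follows from~(1): if $\Phi$ is maximal then it is consistent (the first conjunct) by definition. For the second conjunct, suppose $\con(\Phi \cup \{\varphi\})$ but $\varphi \notin \Phi$. Since $\Phi$ is a covering, $\nor\varphi\varphi \in \Phi \subseteq \Phi\cup\{\varphi\}$, and also $\varphi \in \Phi\cup\{\varphi\}$; applying \ref{AssRule} to the enlarged set derives both $\varphi$ and $\nor\varphi\varphi$ from $\Phi\cup\{\varphi\}$, contradicting its consistency. Hence $\varphi \in \Phi$. The main obstacle I anticipate is purely one of bookkeeping with the precise definitions of ``covering'' and ``minimal covering'' from Section~\ref{SectFamiliesOfFormulas}: the whole lemma is definitional plumbing, and the only genuine logical input is that \ref{AssRule} lets us internalize membership as derivability (item~(3)); once that is in hand, each implication is a short argument contrasting the ``at most one'' of consistency against the ``at least one'' of covering.
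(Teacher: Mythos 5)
Most of your proposal tracks the paper's own argument: item (3) via \ref{AssRule}, item (2) by playing the covering property of $\Phi$ off against the consistency of $\Phi \cup \{\varphi\}$, and the forward direction of item (1) by the same two observations (covering gives ``at least one'' in $\Phi$, consistency plus \ref{AssRule} gives ``at most one'', and expandedness follows because the covering cannot supply $\nor \varphi \varphi$ when $\varphi$ is derivable). Those parts are correct and essentially identical to the paper's proof.

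The gap is in the converse direction of item (1), which you dismiss with ``minimal covering gives the covering half and consistency half of maximality directly \dots\ `expanded' is not even needed for this direction.'' Under the paper's definitions this is false. Consistency is a condition on the \emph{expansion} $D(\Phi)$ --- at most one of $\varphi, \nor \varphi \varphi$ may be \emph{derivable} --- whereas minimal covering is a condition on \emph{membership} in $\Phi$. Since the lemma only assumes $D$ \emph{includes} \nameref{AssRule}, $D$ may contain further rules, and then a minimal covering can be inconsistent: for instance one containing $\nor \varphi_1 \varphi_1$, $\nor \varphi_2 \varphi_2$ and $\lnot \left( \nor \varphi_1 \varphi_2 \right)$ (three negations of pairwise distinct formulas, so compatible with minimality) derives both $\nor \varphi_1 \varphi_2$ and its negation as soon as \nameref{NorRule} is available. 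The expandedness hypothesis is exactly what excludes this, via the chain the paper gives:
\begin{align*}
\Phi \proves \varphi \Rightarrow \varphi \in \Phi \Rightarrow \nor \varphi \varphi \notin \Phi \Rightarrow \Phi \not\proves \nor \varphi \varphi,
\end{align*}
where the outer implications use expandedness and the middle one uses minimal covering. You need to supply this step; without it the equivalence in item (1) is not established.
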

\begin{proof}
\mbox{}
\begin{enumerate}
\item
\begin{description}
\item[ $\Rightarrow$ direction:]
$\Phi$ being a covering implies $\card \left\{ \varphi , \nor \varphi \varphi \right\} \cap \Phi \geq 1 $, while it being consistent implies, using assumption rule, that same cardinality being $\leq 1$.
\\
Suppose $\Phi \proves \varphi$. By contradiction, $\nor \varphi \varphi$ cannot be in $\Phi$, for if it were then, by assumption rule, $\Gamma \proves \nor \varphi \varphi $, which would violate consistency. Thus $\nor \varphi \varphi \notin \Phi$ which, minimal covering property having already been ascertained, implies $\varphi \in \Phi$.
\item[ $\Leftarrow$ direction:]
It all reduces to showing that $\Phi$ is consistent:
\begin{align*}
\Phi \proves \varphi \Rightarrow \varphi \in \Phi \Rightarrow \nor \varphi \varphi \notin \Phi \Rightarrow \Phi \not{\proves} \nor \varphi \varphi,
\end{align*}
where middle deduction uses minimal covering hypothesis, while external ones both use expansion hypothesis. No deduction rule was needed, which is remarkable.
\end{description}
\item
Assuming LHS of last thesis, consistency and assumption rule give $ \nor \varphi \varphi \notin \Phi \cup \left\{ \varphi \right\} \Rightarrow \nor \varphi \varphi \notin \Phi$. 
So $\varphi \in \Phi$, being the latter a minimal covering.
\item
Obvious.
\end{enumerate}
\end{proof}

\begin{lemma}
\label{ConsistencyVSImplication}
\FPeval{RuleChoice}{round(2^\PCtrRuleCounter + 2^\AntRuleCounter:0)}
$D \supseteq \FPprint\RuleChoice$, $\Phi \subseteq F^{S}$ and $\varphi \in F^{S}$. Then
\begin{enumerate}
\item
Not $\con \Phi \cup \left\{ \varphi \right\} \Rightarrow \Phi \proves \nor \varphi \varphi$
\item
$\Phi \proves \varphi$ and $\con \Phi \Rightarrow \con \Phi \cup \left\{  \varphi \right\}$
\end{enumerate}
\end{lemma}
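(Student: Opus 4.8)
The plan is to establish (1) as a single application of the Contradiction$+$ rule and then read off (2) as its contrapositive. So I would begin by unfolding the failure of consistency in the hypothesis of (1): if $\Phi \cup \left\{ \varphi \right\}$ is not $D$-consistent, then by definition there is a formula $\psi$ with both $\Phi \cup \left\{ \varphi \right\} \proves_{D} \psi$ and $\Phi \cup \left\{ \varphi \right\} \proves_{D} \nor \psi \psi$. Since each derivation rests on only finitely many antecedent formulas, I would fix finite witnesses $\Gamma_1, \Gamma_2 \subseteq \Phi \cup \left\{ \varphi \right\}$ for these two facts and put $\Gamma := \left( \Gamma_1 \cup \Gamma_2 \right) \setminus \left\{ \varphi \right\} \subseteq \Phi$. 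Two uses of \ref{AntRule} then weaken both derivations up to the common antecedent $\Gamma \cup \left\{ \varphi \right\}$, yielding $\Gamma \cup \left\{ \varphi \right\} \proves_{D} \psi$ and $\Gamma \cup \left\{ \varphi \right\} \proves_{D} \nor \psi \psi$.

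At this point the two premises sit in exactly the shape demanded by \ref{PCtrRule}, provided I instantiate its free parameter $\phi$ to $\psi$, so that the second premise $\nor \psi \phi$ becomes the negation $\nor \psi \psi$. Firing Contradiction$+$ produces $\Gamma \proves_{D} \nor \varphi \varphi$, and since $\Gamma \subseteq \Phi$ this is precisely $\Phi \proves_{D} \nor \varphi \varphi$, which proves (1).

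For (2) I would argue by contradiction: assume $\Phi \proves_{D} \varphi$ and $\con \Phi$, but suppose $\Phi \cup \left\{ \varphi \right\}$ is not consistent. Then part (1) gives $\Phi \proves_{D} \nor \varphi \varphi$, and combined with the assumed $\Phi \proves_{D} \varphi$ this exhibits a formula, namely $\varphi$ itself, derivable together with its negation; that contradicts $\con \Phi$. Hence $\con \Phi \cup \left\{ \varphi \right\}$, as required.

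The only genuinely delicate point is the shape-matching in the first step: inconsistency hands us a bare negation $\nor \psi \psi$, whereas Contradiction$+$ is phrased with the more general $\nor \psi \phi$; the observation that unblocks everything is simply that $\nor \psi \psi$ is the instance $\phi := \psi$ of that schema. Everything else—passing to finite subsets and aligning antecedents through \ref{AntRule}—is routine bookkeeping, and I would note that no rule beyond Contradiction$+$ and Antecedent enters the argument, in agreement with the stated hypothesis on $D$.
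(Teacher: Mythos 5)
Your proof is correct and follows essentially the same route as the paper: extract a finite witness of the inconsistency, use \nameref{AntRule} to align both derivations on a common antecedent containing $\varphi$, fire \nameref{PCtrRule} with its parameter $\phi$ instantiated to $\psi$ to obtain $\Gamma \proves_D \nor \varphi \varphi$, and then obtain (2) as the contrapositive of (1). The only cosmetic difference is that the paper phrases (2) directly via the contrapositive ($\Phi \proves \varphi$ and $\con \Phi$ give $\Phi \not\proves \nor\varphi\varphi$, hence consistency of $\Phi \cup \{\varphi\}$) rather than as a proof by contradiction, which is logically the same argument.
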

\begin{proof}
\mbox{}
\begin{enumerate}
\item
There are $\psi \in F^{S}, \ \Gamma' \subseteq \Phi \cup \left\{ \varphi \right\}$, $\Gamma'$ finite such that $ \proves \Gamma' \lnot \psi $ and $ \proves \Gamma'  \psi $.
\\
This can be restated as  $ \proves \Gamma   \varphi \psi $ and $ \proves \Gamma  \varphi \nor \psi \psi $, with $\Gamma \subseteq \Gamma' \cap \Phi$; by \ref{AntRule} $\Gamma$ can be made non-empty, so using \ref{PCtrRule}: 
\begin{align*}
\begin{aligned}
\Gamma &&  \varphi && \psi
\\
\Gamma && \varphi &&  \nor \psi \psi
\\
\hline
\Gamma && &&   \nor \varphi \varphi
\end{aligned}
\end{align*}
yielding $ \proves \Gamma \nor \varphi \varphi $, i.e. $ \Phi \proves \nor  \varphi  \varphi $.
\item
The hypothesis implies $\Phi \not \proves \nor \varphi \varphi $ which, using previous point, gives $\con \Phi \cup \left\{ \varphi \right\}$.
\end{enumerate}
\end{proof}

\FPeval{RuleChoice}{round( 2^AntRuleCounter + 2^PCtrRuleCounter :0)}
\FPeval{var}{round( 2^AntRuleCounter + 2^NCtrRuleCounter :0)}
\begin{oss}
\label{WeakConsistency}
If $D \supseteq \FPprint\RuleChoice$ or  $D \supseteq \FPprint\var$, then
\begin{align*}
\con_D \Phi \Leftrightarrow \text{ there is } \varphi \in F^S \st \Phi \not \proves_D \lnot \varphi 
\end{align*}
\end{oss}
The easy proof is omitted.
\begin{defn}[Maximization constructions]
\label{DefMaximizationConstructions}
Given a set of rules $D$ and an enumeration $\alpha$ of $F^S$ (so that $F^S= \left\{ \varphi_1, \varphi_2, \ldots \right\}$, where $\varphi_j :=\alpha\left( j \right)$), define, for any $\Phi \subseteq F^S$: 
\begin{align}
\Phi_0 &:= \Phi & \Phi'_{0} &:= \Phi,
\nonumber
\\
&\text{ and then recursively for $j=1,2,3, \ldots$}
\nonumber
\\
\label{Maximization}
\Phi_j &:= 
\begin{cases}
\Phi_{j-1} \cup \left\{ \varphi_j 
\right\}
& \text{ if } \Phi_{j-1} \not \proves_{D}   \lnot \varphi_{j}
\\
\Phi_{j-1} \cup \left\{ \lnot \varphi_j 
\right\}
& \text{ if } \Phi_{j-1} \proves_{D}  \lnot \varphi_{j}
\end{cases}
&
\Phi'_j &:= 
\begin{cases}
\Phi'_{j-1} \cup \left\{ \lnot \lnot \varphi_j 
\right\}
& \text{ if } \Phi'_{j-1} \not \proves_{D}   \lnot \varphi_{j}
\\
\Phi'_{j-1} \cup \left\{ \lnot \varphi_j 
\right\}
& \text{ if } \Phi'_{j-1} \proves_{D}  \lnot \varphi_{j}
\end{cases}.
\end{align}
Finally, set $\mathcal{M}^+_D\left( \Phi \right):= \bigcup_{j=1}^{+\infty} \Phi_j$ and  $\mathcal{M}^-_D\left( \Phi \right):= \bigcup_{j=1}^{+\infty} \Phi'_j$.
\end{defn}
\begin{oss}
\label{MPlusCovering}
\mbox{}
\begin{itemize}
\item
In general, $\mathcal{M}^{\pm}_D\left( \Phi \right)$ depends also on the choice of $\alpha$. However, since we are not going to rely on this fact, we can go for the lighter notation.
\item
$\mathcal{M}^{\pm}_D\left( \Phi \right) \supseteq \Phi$: $\mathcal{M}^{+}_D\left( \Phi \right)$ and $\mathcal{M}^{-}_D\left( \Phi \right)$ are both extensions of $\Phi$.
\item
$\mathcal{M}^+_D\left( \Phi \right)$ is obviously a minimal covering.
\end{itemize}
\end{oss}
\begin{oss}
\label{MPlusConsistency}
\FPeval{RuleChoice}{round(2^\PCtrRuleCounter + 2^\AntRuleCounter :0)}
$D \supseteq \FPprint\RuleChoice , \con_D \Phi \Rightarrow 
\con_D \mathcal{M}^+_{D} \left( \Phi \right)$. 
\end{oss}

\begin{proof}
Referring to \ref{DefMaximizationConstructions}, each $\Phi_{j}$ is recursively $D$-consistent: $\con \Phi_{j-1} \Rightarrow \con \Phi_{j}$, as seen by applying first thesis of \ref{ConsistencyVSImplication} to first branch of \eqref{Maximization} and second to second.
Since the $\Phi_{j}$'s are concentric, any finite subset of $\mathcal{M}^+_D \left( \Phi \right) $ is included in some $\Phi_j$ and thus consistent.
Thus $\mathcal{M}^+_D \left( \Phi \right) $ is consistent.
\end{proof}

\subsection{The Henkin model}
Now a construction procedure for a model of a given sets of formulas $\Phi$ is exhibited. Care is taken upon always specifying which derivation rules we will be using from time to time; this issue is closely related to the study of the minimal requirements $\Phi$ must obey in order to be able to apply such procedure.
\\
The process is in three steps: first $\Phi$ is enlarged to a somehow maximal family, then the latter's good properties are exploited to build the needed interpretation, and finally to show that this interpretation is a model for the derived family, and thus also for the starting subfamily.
\subsubsection{Building an interpretation}
\label{SectBuildingAnInterpretation}
Fix a formal structure $S$. The free interpretation $\left( T^{S}, \omega_{S} \right)$ (cfr. \ref{FreeInterprExt}) can always be built, the matter is how to extend it to negative-arity symbols of $S$.
This can obviously, as discussed in \ref{FreeInterprExt}, be done anyhow one wants, but in the present situation we have $\Phi$ to guide us in the task in a natural way: set
\begin{align*}
f \colon \#^{-1}\left( \Z^{-} \right) \ni s \mapsto
\left(
\left( \nple{t}{-\#\left( s \right)} \right)
\mapsto
\begin{cases}
\true & \text{ if } s \circ t_1 \circ \ldots \circ t_{- \# \left( s \right) } \in \Phi
\\
\false & \text{ else }
\end{cases}
\right)
\end{align*}
and call  $\mathcal{I}_{\Phi} := \left( T^{S}, \omega_{S}^f \right)$ (cfr \ref{FreeInterprExt}).
Note that $\mathcal{I}_{\Phi}$ satisfies (is a model of)
\begin{align*}
\Phi \cap
\bigcup_{s \in {\#}^{-1} \left( \Z^- \right)} 
\circ \left( \left\{ s \right\} \cartprod {\left( T^S \right)}^{\left| \# \left( s \right) \right|} \right)
= \Phi \cap \left\{ \varphi \in F^S_0 \st \ \varphi(1 :1) \notin \left\{ \equiv \right\} \right\}
\end{align*}
while the atomic formulas of $\Phi$ starting with $\equiv$ are satisfied if and only if they are literal identities, i.e. of the form $\equiv t t$ for some $t$ in $T^{S}$. At this time, nothing can be said about higher-order formulas, because no request on $\Phi$ has been done yet. Now we fix one problem a time, starting from basics and trying to accomodate  into the model given by $\Phi$ first the atomic formulas with $\equiv$ as a first character. This first step alone will actually get most of the work done.
\\
The idea is to apply the quotient construction introduced in \eqref{QuotientInterpretation} according to the natural equivalence relation $\Phi$ can give as soon as it is rich enough in formulas. 
\\
That is, define the binary relation $E_{\left( \Phi, D \right)}$ on $T^{S}$ as
\begin{align*}
t_{1} E_{\left( \Phi, D \right)} t_{2} \Leftrightarrow \Phi \proves_{D} \equiv t_{1} t_{2},
\end{align*}
where $D$ is a finite set of deduction rules.

\begin{lemma}
\label{LemmaFreeInterpretationConstruction}
\mbox{}
\begin{enumerate}
\item
\FPeval{RuleChoice}{round(2^\AssRuleCounter + 2^\AntRuleCounter + 2^\ReflRuleCounter + 2^\SubstRuleCounter :0)}
If $D$ includes 
\ref{AssRule}, \ref{AntRule}, \ref{ReflRule}, \ref{SubstRule}
rules (that is, $D \supseteq \FPprint\RuleChoice$), then $E_{\left( \Phi, D \right)}$ is an equivalence relation over $T^{S}$.
\item
If, moreover, $\Phi$ is \mbox{$D$-expanded} (see \ref{DefExpansion}), $E_{\left( \Phi, D \right)}$ is preserved through $\omega_{\left( S, \Phi \right)}$ (cfr. \eqref{DefPreservation} ).
\end{enumerate}
\end{lemma}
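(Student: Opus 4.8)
The plan is to reduce both parts to the single principle encoded by \ref{SubstRule}: one may replace a term by a provably equal one inside a formula. I would treat reflexivity, symmetry and transitivity of $E_{\left(\Phi,D\right)}$ as three instances of that rule applied to a reflexivity axiom or to a given equality.

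For part (1), reflexivity is immediate: \ref{ReflRule} gives $\proves \equiv t t$ and \ref{AntRule} weakens it to $\Phi \proves_D \equiv t t$, so $t\,E\,t$ for every $t\in T^S$. For symmetry, assume $\Phi\proves_D\equiv t_1 t_2$; take $\varphi:=\equiv x t_1$, read the derivable $\Phi\proves_D\equiv t_1 t_1$ as $\varphi\tfrac{t_1}{x}$, and apply \ref{SubstRule} along $\equiv t_1 t_2$ to obtain $\varphi\tfrac{t_2}{x}=\equiv t_2 t_1$, i.e. $t_2\,E\,t_1$. For transitivity, assume $\Phi\proves_D\equiv t_1 t_2$ and $\Phi\proves_D\equiv t_2 t_3$; take $\psi:=\equiv t_1 x$, read the first as $\psi\tfrac{t_2}{x}$, and apply \ref{SubstRule} along $\equiv t_2 t_3$ to get $\psi\tfrac{t_3}{x}=\equiv t_1 t_3$, i.e. $t_1\,E\,t_3$.

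The hard part is that these three invocations of \ref{SubstRule} require the relevant equality to sit in the \emph{antecedent} of the premise sequent, whereas symmetry and transitivity only hand us the equality as something \emph{derivable} from $\Phi$; since the cut rule is deliberately unavailable, I cannot simply graft a derivation of $\equiv t_1 t_2$ (or $\equiv t_2 t_3$) into that antecedent slot. I would resolve this by induction on the derivation of the equality being transported along, exploiting that none of \ref{AssRule}, \ref{AntRule}, \ref{ReflRule}, \ref{SubstRule} ever contracts an antecedent: reading downward, every equality consumed by a \ref{SubstRule} step must already belong to the root antecedent, hence to $\Phi$ itself. The \ref{AssRule} base case (equality literally in $\Phi$) and the \ref{ReflRule}/\ref{AntRule} cases are then the easy ones, and the symmetry induction closes cleanly—reverse the premise by the induction hypothesis, then re-transport along the \emph{same} $\Phi$-member equality. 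I expect the \ref{SubstRule} step of the transitivity argument to be the genuine obstacle, since there the shared endpoint forces one to move across an equality that is only derived, not a member; this is exactly the point where the absence of cut bites, and it is where the inductive bookkeeping must be done with care.

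For part (2) the $D$-expandedness of $\Phi$ dissolves precisely this difficulty: from $t_i\,E\,t_i'$, i.e. $\Phi\proves_D\equiv t_i t_i'$, expandedness (see \ref{DefExpansion}) upgrades the conclusion to $\equiv t_i t_i'\in\Phi$, so every needed equality is now an antecedent member and \ref{SubstRule} applies directly. Preservation of $E$ through $\omega_{\left(S,\Phi\right)}$ (cfr. \eqref{DefPreservation}), which is what licenses the quotient construction of \eqref{QuotientInterpretation}, then splits into two congruence checks. For a positive-arity symbol $s$ I would start from the reflexivity $\Phi\proves_D \equiv (s t_1\cdots t_n)(s t_1\cdots t_n)$ and substitute argument by argument along the members $\equiv t_i t_i'$, obtaining $\Phi\proves_D \equiv (s t_1\cdots t_n)(s t_1'\cdots t_n')$, i.e. $(s t_1\cdots t_n)\,E\,(s t_1'\cdots t_n')$. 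For a negative-arity symbol $s$ I must show the characteristic function defining $\omega_S^f$ is constant on $E$-classes: if $s t_1\cdots t_n\in\Phi$, iterated \ref{SubstRule} along the members $\equiv t_i t_i'$ gives $\Phi\proves_D s t_1'\cdots t_n'$, whence $s t_1'\cdots t_n'\in\Phi$ by expandedness, and part (1)'s symmetry yields the converse. Thus expandedness is exactly the hypothesis that makes part (2) routine, trading the cut-free subtlety of part (1) for a direct membership argument.
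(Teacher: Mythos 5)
Your core derivations coincide with the paper's: reflexivity straight from \ref{ReflRule}; symmetry by viewing $\equiv t_1 t_1$ as $\left[\equiv x\, t_1\right]\tfrac{t_1}{x}$ and applying \ref{SubstRule} along $\equiv t_1 t_2$; transitivity by viewing $\equiv t_1 t_2$ as $\left[\equiv t_1 x\right]\tfrac{t_2}{x}$ and substituting along $\equiv t_2 t_3$; and, for part (2), the same argument-by-argument substitution starting from a reflexivity axiom (positive arity) or from \ref{AssRule} (negative arity), with $D$-expandedness (\ref{DefExpansion}) converting each derived $\equiv t_j t_j'$ into a member of $\Phi$ so that the accumulated antecedent stays inside $\Phi$. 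Part (2) of your proposal is complete and is essentially the paper's proof. The point where you go beyond the paper is the observation that \ref{SubstRule} only transports along an equality that lands in the \emph{antecedent}, so that the derived sequents $\equiv t_1 t_2\ \equiv t_2 t_1$ and $\equiv t_1 t_2\ \equiv t_2 t_3\ \equiv t_1 t_3$ witness symmetry and transitivity of $E_{\left(\Phi,D\right)}$ only when the relevant equalities are actual members of $\Phi$, not merely derivable from it. That concern is legitimate: the paper's proof of part (1) stops at deriving those sequents and never bridges this step; in all of its applications $\Phi$ is $D$-expanded (part (2) and the Henkin-model definition assume it), which collapses $\Phi\proves_D\equiv t_1 t_2$ to $\equiv t_1 t_2\in\Phi$ and makes the issue vanish, but part (1) as stated carries no such hypothesis.

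The gap in your proposal is exactly the one you flag and do not close: the \ref{SubstRule} case of the induction for transitivity. Your monotonicity observation is sound — none of the four rules shrinks an antecedent, so every equality consumed by a \ref{SubstRule} step already sits in the root antecedent, hence in $\Phi$ — and it does close the symmetry case as you describe (reverse the premise by the induction hypothesis, re-transport along the same $\Phi$-member). But in the transitivity induction the final \ref{SubstRule} step gives a premise $\Gamma_0\ \varphi\tfrac{u}{x}$ whose two sides are $\psi_2\tfrac{u}{x}$ and $\psi_3\tfrac{u}{x}$, whereas the pair you actually hold is $t_2=\psi_2\tfrac{u'}{x}$, $t_3=\psi_3\tfrac{u'}{x}$; the induction hypothesis therefore does not apply directly, and relating $\psi_i\tfrac{u}{x}$ to $\psi_i\tfrac{u'}{x}$ is itself a transitivity-shaped step, so the induction as sketched does not visibly terminate. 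Until that bookkeeping is actually carried out (or part (1) is read with $\Phi$ expanded, which costs nothing downstream), your proof of part (1) for general $\Phi$ is incomplete — though you should note that the paper's own proof is silent on the very same point.
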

\begin{proof}
\mbox{}
\begin{enumerate}
\item
The three properties of reflexivity, symmetry and transitivity to be shown to establish first part of the thesis correspond to show the derivability of the respective sequents:
\begin{align*}
&\equiv t_1 t_1
\\
&\equiv t_1 t_2 \equiv t_2 t_1
\\
&\equiv t_1 t_2 \equiv t_2 t_3 \equiv t_1 t_3
\end{align*}
The first sequent derives directly from a rule assumed to belong to $D$.
\\
The second one can be derived using just the two rules: reflexivity rule gives the sequent $\equiv t_1 t_1$ which can be seen as $\left[ \equiv x_1 t_1 \right]\frac{t_1}{x_1}$ (here $x_1$ is supposed not to occur in $t_1$). Antecedent rule then produces the sequent $\equiv t_1 t_2 \left[ \equiv x_1 t_1 \right]\frac{t_1}{x_1} $, which yields, through substitution rule, $\equiv t_1 t_2 \left[ \equiv x_1 t_1 \right]\frac{t_2}{x_1}${}$=\equiv t_1 t_2  \equiv t_2 t_1$.
\\
For the last sequent, start deriving the sequent $ \equiv t_1 t_2 \equiv t_2 t_3 \equiv t_1 t_2${}$= \equiv t_1 t_2 \equiv t_2 t_3 [\equiv t_1 x_1]\frac{t_2}{x_1} $ (assuming $x_1$ not occurring in $t_1$) via assumption rule, and then $ \equiv t_1 t_2 \equiv t_2 t_3 \equiv t_1 t_3$ via substitution rule.
\item
Let $s \in \#^{-1} \left( \Z \setminus \left\{ 0 \right\} \right)$ and $\nple {t}{\left| \#\left( s \right)  \right| }$, $\nple {t'}{\left| \#\left( s \right)  \right| } \in T^{S}$ be given such that $\Phi \proves_{D} \equiv t_{j} t_{j}'$. By expansion hypothesis, we can strengthen this to $ \equiv t_{j} t_{j}' \in \Phi $. As stated in \eqref{DefPreservation}, we must show that
\begin{align}
\label{QuotientInterpretationCondition}
\omega_{\left( S, \Phi \right)} \left( s \right)  \left( \left( \nple{t}{\left| \#\left( s \right) \right|} \right)  \right)
{E_{\left( \Phi, D \right)}^{\prime}}
\omega_{\left( S, \Phi \right)} \left( s \right)  \left( \left( \nple{t'}{\left| \#\left( s \right) \right|} \right)  \right)
\end{align}
Let us go by cases.
\begin{description}
\item[$ \# \left( s \right) > 0 $:]
Condition \eqref{QuotientInterpretationCondition} becomes
\begin{align*}
\Phi \proves_{D} \equiv 
s \circ t_1 \circ \ldots \circ t_{\#\left( s \right)}
s \circ t_1' \circ \ldots \circ t_{\#\left( s \right)}'
\end{align*}
$ \proves_{D}  \equiv 
s \circ t_1 \circ \ldots \circ t_{\#\left( s \right)}
s \circ t_1 \circ \ldots \circ t_{\#\left( s \right)}$ 
by reflexivity of equivalence rule of $D$. From this sequent we obtain the sequent 
$ \equiv t_1 t_1' \ldots \equiv t_{\#(s)} t_{\#(s)}' \equiv 
s \circ t_1 \circ \ldots \circ t_{\#\left( s \right)}
s \circ t_1 \circ \ldots \circ t_{\#\left( s \right)}  
= \equiv t_1 t_1' \ldots \equiv t_{\#(s)} t_{\#(s)}' \left[ \equiv 
s \circ t_1 \circ \ldots \circ t_{\#\left( s \right)}
 s \circ x_1 \circ \ldots \circ t_{\#\left( s \right)} \right] \frac{t_1}{x_1}  
$ by antecedent rule of $D$, and then
\begin{align*}
\equiv t_1 t_1' \ldots \equiv t_{\#(s)} t_{\#(s)}' \left[ \equiv 
s \circ t_1 \circ \ldots \circ t_{\#\left( s \right)}
 s \circ x_1 \circ \ldots \circ t_{\#\left( s \right)} \right] \frac{t_1'}{x_1}
\end{align*}
by substitution rule of $D$. Here, always using variable $x_1$ which is assumed not to occur in any $t_j$ or $t_j'$, we iterate this procedure to obtain the sequent
\begin{align*}
\equiv t_1 t_1' \ldots \equiv t_{\#(s)} t_{\#(s)}' \equiv 
s \circ t_1 \circ \ldots \circ t_{\#\left( s \right)}
 s \circ t_1' \circ \ldots \circ t_{\#\left( s \right)}' 
\end{align*}
Now, since $\Phi$ is expanded, all the formulas $\equiv t_j t_j'$ belong to $\Phi$, so the last sequent implies that $\Phi \proves_D \equiv 
s \circ t_1 \circ \ldots \circ t_{\#\left( s \right)}
 s \circ t_1' \circ \ldots \circ t_{\#\left( s \right)}'$.

\item[$ \# \left( s \right) < 0 $:]
Condition \eqref{QuotientInterpretationCondition} becomes
\begin{align*}
\omega_{\left( S, \Phi \right)} \left( s \right) \left( \left( \nple {t}{-\#\left( s \right)} \right) \right)
=
\omega_{\left( S, \Phi \right)} \left( s \right) \left( \left( \nple {t^{\prime}}{-\#\left( s \right)} \right) \right)
\end{align*}
To make argument easier, we split the it into a further pair of subcases.
\begin{description}
\item[$s \circ t_1 \circ \ldots \circ t_{-\#\left( s \right)} \in \Phi $]
\mbox{}\\
We must prove that $s \circ t_{1}^{\prime} \circ \ldots \circ t_{-\#\left( s \right)}^{\prime} \in \Phi $.
\\
Starting sequent 
$s \circ t_{1}^{} \circ \ldots \circ t_{-\#\left( s \right)}^{} \  s \circ t_{1}^{} \circ \ldots \circ t_{-\#\left( s \right)}^{} $ is obtained using assumption rule. Then by antecedence: $s \circ t_{1}^{} \circ \ldots \circ t_{-\#\left( s \right)}^{} \ \equiv t_1 t'_1 \ s \circ t_{1}^{} \circ \ldots \circ t_{-\#\left( s \right)}^{} $ and finally by substitution, as already done in this proof:  $s \circ t_{1}^{} \circ \ldots \circ t_{-\#\left( s \right)}^{} \ \equiv t_1 t'_1 \ s \circ t_{1}^{\prime} \circ \ldots \circ t_{-\#\left( s \right)}^{} $. Iterating antecedence/substitution we get our ultimate sequent
$s \circ t_{1}^{} \circ \ldots \circ t_{-\#\left( s \right)}^{} \ \equiv t_1 t'_1 \ldots \equiv t_{-\#\left( s \right)}  t'_{-\#\left( s \right)} \ s \circ t_{1}^{\prime} \circ \ldots \circ t_{-\#\left( s \right)}^{\prime}$, which says $\Phi \proves_{D} s \circ t_{1}^{\prime} \circ \ldots \circ t_{-\#\left( s \right)}^{\prime}$ since $s \circ t_{1}^{} \circ \ldots \circ t_{-\#\left( s \right)}^{} \ \equiv t_1 t'_1 \ldots \equiv t_{-\#\left( s \right)}  t'_{-\#\left( s \right)}$ are all formulas of $\Phi$.
\item[$s \circ t_1 \circ \ldots \circ t_{-\#\left( s \right)} \notin \Phi $]
\mbox{}
\\
Suppose that $s \circ t'_1 \circ \ldots \circ t'_{-\#\left( s \right)} \in \Phi $. Since $\Phi$ contains also swapped equivalences $\equiv t'_1 t_1 \ldots \equiv t_{ - \#\left( s \right)}  t_{ - \#\left( s \right)}'$ (this can be shown using just reflexivity, antecedent and substitution rules), the same last argument could be repeated to show $\Phi \proves_{D} s \circ t_{1} \circ \ldots \circ t_{-\#\left( s \right)}$ and thus (by expansion hypothesis) $s \circ t_{1} \circ \ldots \circ t_{-\#\left( s \right)} \in \Phi$, a contradiction.  
\end{description}
\end{description}
\end{enumerate}
\end{proof}
Ultimately, we have cascaded the three constructions of \ref{StandardConstructions}: first free interpretation, then extension of free interpretation with a $f$ derived from $\Phi$, and finally (with \ref{LemmaFreeInterpretationConstruction} stating the constraints on $D$ and the bounds between $D$ and $\Phi$ needed to take this last step) quotient interpretation, to get the following definition.
\FPeval{RuleChoice}{round(2^\AssRuleCounter + 2^\AntRuleCounter + 2^\ReflRuleCounter + 2^\SubstRuleCounter :0)} 
\begin{defn}[Henkin model]
If $D \supseteq \FPprint\RuleChoice$ and $\Phi$ is any $D$-expanded set of formulas of $F^S$, set
\begin{align*}
\mathcal{I}_{\left( \Phi, D \right)} 
:= \frac{\mathcal{I}_{\Phi}}{E_{\left( \Phi , D \right)}}
\end{align*}
\end{defn}
This interpretation is the tool needed to work out satisfiability, and thus completeness, results in the sequel.
\subsubsection{Making it a model}
\label{SectMakingItAModel}
\begin{lemma}
\label{AtomicFormulasCompleteness}
\FPeval{RuleChoice}{round(2^\AssRuleCounter + 2^\AntRuleCounter + 2^\ReflRuleCounter + 2^\SubstRuleCounter:0)}
$D \supseteq \FPprint\RuleChoice$, $\Phi$ $D$-expanded, $\varphi \in F_{0}^{S}$. Then
\begin{align*}
 \mathcal{I}_{\left( \Phi, D \right)} \models \varphi \Leftrightarrow \varphi \in \Phi 
\end{align*}
\end{lemma}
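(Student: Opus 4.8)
The plan is to split on the leading symbol of the atomic formula $\varphi$, relying on two facts about the quotient model $\mathcal{I}_{\left( \Phi, D \right)} = \mathcal{I}_{\Phi} / E_{\left( \Phi, D \right)}$: its carrier is the set of $E_{\left( \Phi, D \right)}$-classes of terms, and each term $t$, being interpreted as itself in the free interpretation $\mathcal{I}_{\Phi}$, is a representative of its own class $[t]$. Every $\varphi \in F_0^S$ either begins with a relation symbol $s \neq \equiv$ or has the shape $\equiv t_1 t_2$, so these two cases are exhaustive, and in each I would establish the biconditional as a chain of equivalences.

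For $\varphi = s \circ t_1 \circ \ldots \circ t_n$ with $s \neq \equiv$, I would compute the truth value of $\varphi$ in the quotient via the quotient construction: the interpretation of $s$ on the classes $[t_1], \ldots, [t_n]$ is obtained by choosing representatives and applying $f\left( s \right) = \omega_{\left( S, \Phi \right)}\left( s \right)$, which is legitimate precisely because of the preservation established in the second part of \ref{LemmaFreeInterpretationConstruction}. Taking the representatives $t_1, \ldots, t_n$ themselves, this value is $f\left( s \right)\left( t_1, \ldots, t_n \right)$, which by the definition of $f$ equals $\true$ exactly when $s \circ t_1 \circ \ldots \circ t_n \in \Phi$. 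Hence $\mathcal{I}_{\left( \Phi, D \right)} \models \varphi \Leftrightarrow \varphi \in \Phi$ in this case.

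For $\varphi = \equiv t_1 t_2$ the symbol $\equiv$ is interpreted as genuine identity of classes in the quotient, so $\mathcal{I}_{\left( \Phi, D \right)} \models \equiv t_1 t_2$ holds iff $[t_1] = [t_2]$, i.e. iff $t_1 E_{\left( \Phi, D \right)} t_2$, i.e. iff $\Phi \proves_D \equiv t_1 t_2$. Since $\Phi$ is $D$-expanded (see \ref{DefExpansion}), this last condition is equivalent to $\equiv t_1 t_2 \in \Phi$, closing the case. The only genuinely delicate point is the well-definedness underlying the relation case: one must know that $f\left( s \right)$ returns the same truth value on any two representative tuples of the same classes. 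This is exactly the preservation clause of \ref{LemmaFreeInterpretationConstruction}, for which the rule hypotheses on $D$ and the $D$-expandedness of $\Phi$ were imposed; once it is granted, the proof is a straightforward unwinding of the definitions of $f$, of $E_{\left( \Phi, D \right)}$, and of the quotient.
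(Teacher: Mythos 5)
Your proof is correct and takes essentially the same route as the paper's: a case split on the leading character of $\varphi$, unwinding the quotient construction and the definition of $f$ in the relation-symbol case, and unwinding $E_{\left( \Phi, D \right)}$ together with $D$-expandedness in the $\equiv$ case. The paper's version is the same chain of equivalences written more tersely, leaving implicit the well-definedness point that you correctly attribute to \ref{LemmaFreeInterpretationConstruction}.
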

\begin{proof}
By cases according to the first character of $\varphi$.
\begin{description}
\item[Case $\varphi = \equiv t_{1} t_{2}$]
\begin{align*}
\mathcal{I} \models \varphi &\Leftrightarrow \mathcal{I}\left( t_1 \right) =  \mathcal{I}\left( t_2 \right) \Leftrightarrow \mathcal{I}_{\Phi} \left( t_1 \right) E_{\left( \Phi, D \right)} \mathcal{I}_{\Phi} \left( t_2 \right)
\\
&\Leftrightarrow t_1 E_{\left( \Phi, D \right)} t_2 \Leftrightarrow \Phi \proves_{D} \equiv t_1 t_2 \Leftrightarrow \varphi \in \Phi 
\end{align*}
\item[Case $\varphi = s t_{1} \ldots t_{- \# \left( s \right)}$ for some $s \in \#^{-1} \left( \Z^{-} \right)$]
\begin{gather*}
\mathcal{I} \models \varphi 
\Leftrightarrow \omega_{\left( S, \Phi \right)} \left( s \right) \left( \mathcal{I}_{\Phi} \left( t_1 \right), \ldots, \mathcal{I}_{\Phi} \left( t_{-\#\left( s \right)} \right) \right) = \true
\\
\Leftrightarrow \omega_{\left( S, \Phi \right)} \left( s \right) \left(  t_1 , \ldots,  t_{-\# \left( s \right)} \right) = \true
\Leftrightarrow s t_1 \ldots t_{-\#\left( s \right)} \in \Phi
\end{gather*}
\end{description}
\end{proof}
Note how consistency is not at all involved in \ref{AtomicFormulasCompleteness}'s statement and proof. That is a weak result, meaning that it works for any kind of $\Phi$. Of course, if $\Phi$ is inconsistent, $\mathcal{I}_{\left( \Phi, D \right)}$ will behave inconsistently, too:
\begin{oss}[see  \cite{katz1994contemporary} for definitions] 
Given a formula $\varphi$, one can consider the set of the minterms occurring in each of its prime implicants, and thus obtain a family of sets of the form $\left\{ \varphi_1, \ldots, \varphi_n \right\}$, with each $\varphi_j$ in $F^S_0 
\disjcup \lnot\left( F^S_0 \right)$. It is easy to check that $\mathcal{I}_{\left( \Phi, D \right)} \models \varphi $ if and only if at least one set of this family is included in $\Phi$.
\end{oss}
Previous remark hints at the fact that $\Phi$ must itself satisfy some additional ``completeness'' request if one wants $\mathcal{I}_{\left( \Phi, D \right)}$ to be an interpretation for the \emph{whole} $\Phi$. That is why \ref{LemmaFreeInterpretationModel} makes the request of minimal covering.
\\
In a similar way, one has to request witness-completeness for $\Phi$ in the following precise sense.
\begin{defn}
$\Phi$ is witness-furnished iff $\exists x_m \varphi \in \Phi \Rightarrow$ there is $t \in T^{S} \st \ \varphi \frac{t}{x_m} \in \Phi$.
\end{defn}
One might say that to give an interpretation of non-atomic formulas, $\Phi$ must be in some sense complete according to both $\exists$ and $\nor$, which are the only two possible starting characters of a non-atomic formulas.
\\
However, $\exists$-completeness can be consider of a more technical, and less fundamental, nature than $\nor$-completeness. Indeed, the former will be shown to be dispensable in final result, and only needed to perform the proof.
What's more, if one wants to extend present result to the case of uncountable symbol set $S$ (not treated here), he finds that is $\nor$ completion, not the need of witnesses, which poses the hardest challenges and forces resorting to the axiom of choice to do what here is done in \ref{DefMaximizationConstructions} relying on countability of $S$. See \cite{ebbinghaus1994mathematical}. 
\begin{lemma}
\label{LemmaFreeInterpretationModel}
\FPeval{RuleChoice}{round(2^\AssRuleCounter + 2^\AntRuleCounter + 2^\ReflRuleCounter + 2^\SubstRuleCounter + 2^\NorRuleCounter + 2^\NorCommRuleCounter + 2^\ESRuleCounter:0)}
$D \supseteq \FPprint\RuleChoice$, $\Phi$ $D$-expanded, $\Phi$ minimal covering, $\Phi$ witness-furnished, $\varphi \in F^S$. Then
\begin{align*}
\mathcal{I}_{\left( \Phi, D \right)} \models \varphi \Leftrightarrow \varphi \in \Phi
\end{align*}
\end{lemma}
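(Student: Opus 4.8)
The plan is to prove both implications at once by induction on the number of logical symbols (the occurrences of $\nor$ and $\exists$) in $\varphi$; since substituting a term for a variable leaves this number unchanged, the induction hypothesis will be available for substitution instances as well as for proper subformulas. The base case, $\varphi \in F^S_0$ atomic, is exactly \ref{AtomicFormulasCompleteness}: its hypotheses are already met, because the rule set of the present statement contains the four rules required there and $\Phi$ is $D$-expanded. There remain the two ways of forming a non-atomic formula, $\varphi = \nor \varphi_1 \varphi_2$ and $\varphi = \exists x \psi$, which I treat as the inductive steps.

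For the quantifier step, recall that the domain of $\mathcal{I}_{\left( \Phi, D \right)}$ is $T^S / E_{\left( \Phi, D \right)}$, so every element of it is the class of a term and the substitution lemma of \ref{SectSemantics} gives $\mathcal{I}_{\left( \Phi, D \right)} \models \exists x \psi$ iff $\mathcal{I}_{\left( \Phi, D \right)} \models \psi \tfrac{t}{x}$ for some $t \in T^S$. If $\exists x \psi \in \Phi$, witness-furnishedness supplies a $t$ with $\psi \tfrac{t}{x} \in \Phi$; the induction hypothesis applied to $\psi \tfrac{t}{x}$ yields $\mathcal{I}_{\left( \Phi, D \right)} \models \psi \tfrac{t}{x}$, whence $\mathcal{I}_{\left( \Phi, D \right)} \models \exists x \psi$. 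Conversely, if $\mathcal{I}_{\left( \Phi, D \right)} \models \exists x \psi$, a witnessing term $t$ gives $\mathcal{I}_{\left( \Phi, D \right)} \models \psi \tfrac{t}{x}$, the induction hypothesis gives $\psi \tfrac{t}{x} \in \Phi$, i.e.\ $\Phi \proves_D \psi \tfrac{t}{x}$ by the assumption rule, and a single application of \ref{ESRule} followed by $D$-expandedness produces $\exists x \psi \in \Phi$. Note that \ref{ESRule} alone suffices and \ref{EARule} is never invoked.

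For the NOR step, the semantics give $\mathcal{I}_{\left( \Phi, D \right)} \models \nor \varphi_1 \varphi_2$ iff $\mathcal{I}_{\left( \Phi, D \right)} \not\models \varphi_1$ and $\mathcal{I}_{\left( \Phi, D \right)} \not\models \varphi_2$, which by the induction hypothesis is iff $\varphi_1 \notin \Phi$ and $\varphi_2 \notin \Phi$. Since $\Phi$ is a minimal covering, $\varphi_i \notin \Phi$ is equivalent to $\nor \varphi_i \varphi_i \in \Phi$, so everything reduces to the equivalence
\begin{align*}
\nor \varphi_1 \varphi_2 \in \Phi \quad&\Longleftrightarrow\quad \nor \varphi_1 \varphi_1 \in \Phi \ \text{ and }\ \nor \varphi_2 \varphi_2 \in \Phi.
\end{align*}
The right-to-left implication is immediate: from $\nor \varphi_1 \varphi_1, \nor \varphi_2 \varphi_2 \in \Phi$ we get $\Phi \proves_D \nor \varphi_1 \varphi_1$ and $\Phi \proves_D \nor \varphi_2 \varphi_2$ by the assumption rule, and \ref{NorRule} together with $D$-expandedness delivers $\nor \varphi_1 \varphi_2 \in \Phi$.

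The main obstacle is the remaining, left-to-right implication: passing from $\nor \varphi_1 \varphi_2 \in \Phi$ to the two diagonal statements $\nor \varphi_1 \varphi_1 \in \Phi$ and $\nor \varphi_2 \varphi_2 \in \Phi$. The natural route is to establish the derived \emph{NOT introduction} rule, which turns a sequent $\Gamma\ \nor \varphi_1 \varphi_2$ into $\Gamma\ \nor \varphi_1 \varphi_1$ (and, after applying \ref{NorCommRule} to treat the second argument symmetrically, into $\Gamma\ \nor \varphi_2 \varphi_2$); applying it to $\Phi \proves_D \nor \varphi_1 \varphi_2$ and invoking $D$-expandedness would give both diagonals at once. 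This is precisely where the deductive apparatus is stressed hardest and where one must be careful about \emph{which} rules are genuinely needed, since NOT introduction is the step that appeals to the contradiction rules: it is here, and essentially only here, that the argument leans on reasoning of reductio type, with consistency of $\Phi$ (guaranteed by \ref{LemmaMaximalCovering}, as $\Phi$ is a minimal covering and $D$-expanded) forbidding both a formula and its diagonal negation in $\Phi$. I would therefore spell out this single implication in full, tracking the rules it consumes, and regard the rest of the NOR step together with the whole quantifier step as routine.
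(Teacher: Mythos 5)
Your proposal follows essentially the same route as the paper's proof: induction on formula complexity (the paper uses depth where you use the count of logical symbols, but both measures are preserved by term-for-variable substitution, so either works), the atomic base case delegated to \ref{AtomicFormulasCompleteness}, the quantifier step via witness-furnishedness and \ref{ESRule} over the term-quotient universe, and the NOR step reduced through the minimal-covering property to the NOR- and NOT-introduction rules. Your instinct to single out the left-to-right NOR implication as the delicate point is well founded: the derived NOT-introduction rule is obtained in the paper from \ref{PCtrRule}, which does not appear in the rule set stated in this lemma's hypothesis, so the explicit rule accounting you propose would expose a dependency that the paper's own one-line appeal to ``NOT introduction'' leaves unexamined.
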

\begin{proof}
By induction on the depth $n$ of $\varphi$ (as defined in \ref{DefDepth}). Triggering case $n=0$ is granted by \ref{AtomicFormulasCompleteness}. Taking for proven the thesis for formulas of depth $\leq n$, consider a formula $\varphi$ of depth $n+1$. By cases:
\begin{description}
\item[$\varphi$ is of the form $\nor \varphi_1 \varphi_2$:]
\begin{align*}
\mathcal{I} \models \nor \varphi_1 \varphi_2 & \Leftrightarrow \mathcal{I} \not \models \varphi_1 \text{ and } \mathcal{I} \not \models \varphi_2 \Leftrightarrow
\\
\left\{ \varphi_1, \varphi_2 \right\} \cap \Phi = \emptyset & \overset{* }{\Leftrightarrow} \Phi \proves \nor \varphi_1 \varphi_1 \text{ and } \Phi \proves \nor \varphi_2 \varphi_2
\\
& \Leftrightarrow \Phi \proves \nor \varphi_1 \varphi_2,
\end{align*}
last equivalence being given by NOR and NOT introduction rules, one per direction, and marked equivalence being the one requiring minimal covering hypothesis. The remaining steps go by definition and by inductive hypothesis.
\item[$\varphi$ is of the form $\exists x_m \varphi$:]
\begin{gather*}
\mathcal{I} \models \exists x_m \varphi \Leftrightarrow \mathcal{I} \frac{[t]}{x_m} \models \varphi \text{ for some } t \in T^{S} 
\\
\Leftrightarrow
\mathcal{I} \models \varphi \frac{t}{x_m} \text{ for some } t \in T^{S} \Leftrightarrow  \varphi \frac{t}{x_m}  \in \Phi \text{ for some } t \in T^{S}
\\
\Leftrightarrow \exists x_m \varphi \in \Phi
\end{gather*}
Last equivalence stands on application of \ref{ESRule} followed by use of expansion hypothesis ($ \Rightarrow$) and on assumption that $\Phi$ is witness-furnished ($\Leftarrow$).
Here $\left[ t \right]$ is the equivalence class of the term $t$ according to $E_{\left( \Phi, D \right)}$.
As a passing note, the $t$ occurring in previous equivalence chain is always the same, although this accident is not exploited.
\end{description}
\end{proof}
\begin{lemma}
\label{SingleWitnessInclusion}
\FPeval{RuleChoice}{round(2^AntRuleCounter + 2^ReflRuleCounter + 2^EARuleCounter +2^ESRuleCounter + 2^PCtrRuleCounter:0)}
\FPeval{var}{round(2^AntRuleCounter + 2^ReflRuleCounter + 2^EARuleCounter +2^ESRuleCounter + 2^NCtrRuleCounter:0)}
$D \supseteq \FPprint\RuleChoice$ or $D \supseteq \FPprint\var$; given $\Phi \cup \left\{ \varphi \right\} \subseteq F^S$, $x_m \in X$:
\begin{align*}
\con \Phi \cup {\exists x_m \varphi} &\Rightarrow \con \Phi \cup \left\{ \varphi \frac{x_n}{x_m} \right\} & \forall x_n \in X \setminus \free\left( \Phi \cup \left\{ \exists x_m \varphi \right\} \right)
\end{align*}
\end{lemma}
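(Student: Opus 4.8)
The plan is to prove the contrapositive: supposing $\Phi \cup \{\chi\}$ is $D$-inconsistent, where I abbreviate $\chi := \varphi\frac{x_n}{x_m}$, I will exhibit an inconsistency of $\Phi \cup \{\exists x_m \varphi\}$, contradicting the hypothesis. Unwinding the definition of (in)consistency, there are a formula $\psi$ and finite subsets of $\Phi \cup \{\chi\}$ from which both $\psi$ and $\nor \psi \psi$ are derivable; weakening both derivations to a common antecedent $\Gamma \cup \{\chi\}$ with $\Gamma \subseteq \Phi$ finite (this costs only \ref{AntRule}), I obtain two derivable sequents of antecedent $\Gamma \cup \{\chi\}$ and respective succedents $\psi$ and $\nor \psi \psi$. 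Throughout I write $\Gamma, \rho \proves_D \sigma$ for derivability of the sequent whose antecedent is $\Gamma \cup \{\rho\}$ and whose succedent is $\sigma$.

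The natural move is to apply \ref{EARule} to each of these two sequents, using $x_n$ as eigenvariable, so as to trade the antecedent formula $\chi = \varphi\frac{x_n}{x_m}$ for $\exists x_m \varphi$. This is precisely where the one genuine difficulty lies: \ref{EARule} is subject to the proviso $x_n \in X \setminus \free(\Gamma \cup \{\exists x_m \varphi, \sigma\})$ on the succedent $\sigma$, and while $x_n \notin \free(\Gamma)$ (since $\Gamma \subseteq \Phi$) and $x_n \notin \free(\exists x_m \varphi)$ hold by the standing hypothesis $x_n \in X \setminus \free(\Phi \cup \{\exists x_m \varphi\})$, nothing prevents the witnessing succedent $\psi$ from containing $x_n$. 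Hence the rule cannot be fired against $\psi$ directly.

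To get around this I first relocate the contradiction onto an $x_n$-free formula. Take $\beta := \exists x_m \varphi$ (already $x_n$-free; a reflexivity $\equiv t t$ with $t$ avoiding $x_n$, available via \ref{ReflRule}, would serve equally well) and set $\Theta := \nor \beta \beta$, again $x_n$-free. I claim $\Gamma, \chi \proves_D \Theta$ and $\Gamma, \chi \proves_D \nor \Theta \Theta$. If $D \ni$ \ref{PCtrRule}: weaken (by \ref{AntRule}) the two witnesses to antecedent $\Gamma \cup \{\chi, \beta\}$ and apply \ref{PCtrRule} discharging $\beta$, which yields $\Gamma, \chi \proves_D \nor \beta \beta = \Theta$; repeating verbatim with $\Theta$ in place of $\beta$ yields $\Gamma, \chi \proves_D \nor \Theta \Theta$. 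If instead $D \ni$ \ref{NCtrRule}: weakening the two witnesses to antecedent $\Gamma \cup \{\chi, \nor \alpha \alpha\}$ and applying \ref{NCtrRule} delivers $\Gamma, \chi \proves_D \alpha$ for an arbitrary succedent $\alpha$, so both $\Theta$ and $\nor \Theta \Theta$ drop out immediately. Either branch ends with two derivable sequents of common antecedent $\Gamma \cup \{\chi\}$ whose succedents $\Theta, \nor \Theta \Theta$ avoid $x_n$.

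Now \ref{EARule} does apply to each of these sequents — its proviso is met on all three counts, since $\Gamma$, $\exists x_m \varphi$ and the succedent all avoid $x_n$ — and it converts the antecedent $\chi = \varphi\frac{x_n}{x_m}$ into $\exists x_m \varphi$, giving $\Gamma, \exists x_m \varphi \proves_D \Theta$ and $\Gamma, \exists x_m \varphi \proves_D \nor \Theta \Theta$. As $\Gamma \subseteq \Phi$, this says that $\Phi \cup \{\exists x_m \varphi\}$ derives both $\Theta$ and its negation, i.e. is $D$-inconsistent, the desired contradiction. The argument invokes, of the hypothesised rules, only \ref{AntRule}, \ref{EARule} and one of \ref{PCtrRule}/\ref{NCtrRule} (with \ref{ReflRule} if one prefers a reflexivity for $\beta$); \ref{ESRule} is not needed for this implication. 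The sole delicate point is the eigenvariable proviso of \ref{EARule}, which is exactly what the detour through the $x_n$-free $\Theta$ is designed to satisfy.
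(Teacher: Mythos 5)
Your proof is correct, and it isolates exactly the right obstruction: the eigenvariable proviso of \ref{EARule} forces the contradiction to be funnelled through an $x_n$-free succedent before the antecedent formula $\varphi\frac{x_n}{x_m}$ can be traded for $\exists x_m \varphi$. The paper attacks the same obstruction but implements the detour differently: it fixes the canonical closed sentence $\exists x_1 \equiv x_1 x_1$, derives it from the empty antecedent by \ref{ReflRule} followed by \ref{ESRule}, and argues that $\Phi \cup \left\{ \varphi\frac{x_n}{x_m} \right\}$ cannot prove its negation --- for otherwise a single application of \ref{EARule} (legitimate because that negation is closed) would transfer the derivation to $\Phi \cup \left\{ \exists x_m \varphi \right\}$ and contradict its consistency --- and then invokes \ref{WeakConsistency} to convert ``does not prove $\lnot \exists x_1 \equiv x_1 x_1$'' into consistency. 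Your version replaces the canonical sentence by $\Theta := \lnot \exists x_m \varphi$, manufactures both $\Theta$ and $\lnot \Theta$ from the assumed inconsistency via \ref{PCtrRule} (essentially an instance of the first part of \ref{ConsistencyVSImplication}) or via \ref{NCtrRule}, and transfers both by two applications of \ref{EARule}. What each approach buys: the paper's route needs only one \ref{EARule} step and hides the ex-falso bookkeeping inside the unproved remark \ref{WeakConsistency}, at the price of genuinely using \ref{ReflRule} and \ref{ESRule}; yours makes the ex-falso step explicit, dispenses with \ref{ESRule} (and essentially with \ref{ReflRule}) altogether, and therefore establishes the lemma from a strictly smaller rule set than the one hypothesised --- a sharpening that is very much in the spirit of the paper's rule-counting programme.
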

\begin{proof}
Name 
$\Phi_1 := \Phi \cup {\exists x_m \varphi}; \ 
\Phi_2 := \Phi \cup \left\{ \varphi \frac{x_n}{x_m} \right\}$ to ease things up.
\\
The following derivation:
\begin{align*}
\begin{aligned}
\\
\hline
\equiv  x_1 x_1 
\end{aligned}
&
\eqref{ReflRule}
&
\begin{aligned}
&\equiv  x_1 x_1
\\
\hline
\exists x_1 &\equiv  x_1 x_1
\end{aligned}
& \eqref{ESRule}
\end{align*}
says $\emptyset \proves_D \exists x_1 \equiv  x_1 x_1 $, and thus $\Phi_1 \proves_D \exists x_1 \equiv  x_1 x_1 $ and $\Phi_2 \proves_D \exists x_1 \equiv  x_1 x_1 $ by \eqref{AntRule}.
By contradiction now the negation of the latter formula is exhibited to be derivable from $\Phi_1$, which on the other hand is granted to be consistent.
\\
Suppose indeed that $\Phi_2 \proves_D \lnot \exists x_1 \equiv  x_1 x_1 $.
\\
Then there is a sequent $\Gamma \ \varphi \frac{x_n}{x_m} \ \lnot \exists x_1 \equiv  x_1 x_1 $, with $\Gamma$ finite subset of $\Phi$.\\
Now, since $x_n \in X \setminus \free\left( \Phi \cup \left\{ \exists x_m \varphi \right\} \right)$, $x_n$ does not occur free in $\Gamma \ \exists x_m \varphi \ \lnot \exists x_1 \equiv  x_1 x_1 $, permitting
\begin{align*}
\begin{aligned}
\Gamma && \varphi \tfrac{x_n}{x_m} && \lnot \exists x_1 \equiv  x_1 x_1 
\\
\hline
\Gamma && \exists x_m \varphi && \lnot \exists x_1 \equiv  x_1 x_1 
\end{aligned}
&& 
\eqref{EARule},
\end{align*}
so that $\Phi_1 \proves_D \lnot \exists x_1 \equiv x_1 x_1$.
\\
In the end, it must be  $\Phi_2 \not{\proves_D} \lnot \exists x_1 \equiv  x_1 x_1 $. This, using lastly \eqref{WeakConsistency} (which in turn employs a contradiction rule), equates to $\Phi_2$ being consistent.
\end{proof}

\begin{lemma}[Witness Adjunction Construction]
\label{LemmaWitnessInclusion}
\FPeval{RuleChoice}{round(2^AntRuleCounter + 2^ReflRuleCounter + 2^EARuleCounter +2^ESRuleCounter + 2^PCtrRuleCounter:0)}
\FPeval{var}{round(2^AntRuleCounter + 2^ReflRuleCounter + 2^EARuleCounter +2^ESRuleCounter + 2^NCtrRuleCounter:0)}
$D \supseteq \FPprint\RuleChoice$ or $D \supseteq \FPprint\var$; if $\Phi \subseteq F^S$ is $D$-consistent and $ X \setminus \free\left( \Phi \right)$ is countable, there is $\mathcal{W}_{D} \left( \Phi \right) \subseteq F^S, \ \mathcal{W}_{D} \left( \Phi \right) \supseteq \Phi $ such that
\begin{itemize}
\item
$\mathcal{W}_{D} \left( \Phi \right) $ is consistent and witness-furnished.
\item
Every $D$-consistent superset of $\mathcal{W}_{D} \left( \Phi \right)$ is witness-furnished.
\end{itemize}
\end{lemma}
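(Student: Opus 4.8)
The plan is to build $\mathcal{W}_D(\Phi)$ in a single pass over all existential formulas, adjoining a fresh witness exactly when doing so is harmless, and — this is the crucial design choice — to make the decision whether to adjoin a witness at each stage depend \emph{only} on $D$-consistency with the portion already constructed. This monotone character is what yields the strong second clause: consistency with an earlier stage is inherited by any consistent superset, so the witnesses forced at construction time are still forced, and still present, in every consistent $\Psi \supseteq \mathcal{W}_D(\Phi)$.

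First I would fix an enumeration $\exists x_{m_1}\varphi_1, \exists x_{m_2}\varphi_2,\ldots$ of all formulas of $F^S$ whose first character is $\exists$ (legitimate since $S$ and $X$ are at most countable, so $F^S$ is countable), and record that $X\setminus\free(\Phi)$ supplies a countably infinite reservoir of variables avoiding $\free(\Phi)$. Setting $\Phi_0:=\Phi$, I would define recursively, in the spirit of \ref{DefMaximizationConstructions},
\[
\Phi_k := \begin{cases}\Phi_{k-1}\cup\left\{\varphi_k\tfrac{x_{n_k}}{x_{m_k}}\right\} & \text{if } \con_D \Phi_{k-1}\cup\left\{\exists x_{m_k}\varphi_k\right\},\\[2pt] \Phi_{k-1} & \text{otherwise,}\end{cases}
\]
where, whenever the first branch is taken, $x_{n_k}$ is chosen in $X\setminus\free\left(\Phi_{k-1}\cup\{\exists x_{m_k}\varphi_k\}\right)$ and distinct from all previously used witness variables. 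Finally set $\mathcal{W}_D(\Phi):=\bigcup_{k\geq 1}\Phi_k$. Note the existential $\exists x_{m_k}\varphi_k$ itself is never added, only its witness.

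Next I would check $D$-consistency. Each step preserves it: in the first branch the freshness of $x_{n_k}$ lets me apply \ref{SingleWitnessInclusion} with base $\Phi_{k-1}$, turning $\con_D\Phi_{k-1}\cup\{\exists x_{m_k}\varphi_k\}$ into $\con_D\Phi_{k-1}\cup\{\varphi_k\tfrac{x_{n_k}}{x_{m_k}}\}=\con_D\Phi_k$, while the second branch is trivial. Since the $\Phi_k$ are concentric, any finite subset of $\mathcal{W}_D(\Phi)$ already lies in some $\Phi_k$ and is therefore consistent, so $\con_D\mathcal{W}_D(\Phi)$, exactly as argued in \ref{MPlusConsistency}.

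The heart of the matter is the transfer property from which both itemized claims follow. Let $\Psi$ be any $D$-consistent set with $\Psi\supseteq\mathcal{W}_D(\Phi)$ and suppose $\exists x_m\varphi\in\Psi$; this formula is some $\exists x_{m_k}\varphi_k$ in the enumeration. Because $\Phi_{k-1}\cup\{\exists x_{m_k}\varphi_k\}\subseteq\Psi$ is a subset of a consistent set it is consistent, so at stage $k$ the first branch was taken and the witness $\varphi_k\tfrac{x_{n_k}}{x_{m_k}}=\varphi\tfrac{x_{n_k}}{x_m}$ lies in $\Phi_k\subseteq\mathcal{W}_D(\Phi)\subseteq\Psi$; as $x_{n_k}\in T^S$ this exhibits a witness inside $\Psi$, so $\Psi$ is witness-furnished. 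Taking $\Psi=\mathcal{W}_D(\Phi)$ gives the first bullet and the general $\Psi$ gives the second. The one delicate point, which I expect to be the main obstacle to make fully rigorous, is the bookkeeping of fresh variables: the witness chosen at stage $k$ must avoid the possibly infinite set $\free(\Phi_{k-1})$ as well as $\free(\exists x_{m_k}\varphi_k)$ and all earlier choices, and since only finitely many variables of $X\setminus\free(\Phi)$ are consumed by any finite stage, the infinitude of that reservoir — the content of the countability hypothesis — is precisely what keeps such a choice available throughout.
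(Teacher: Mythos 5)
Your construction, consistency argument (via \ref{SingleWitnessInclusion} applied stage by stage plus the finite-subset/concentric-union observation), and the transfer argument for the second bullet coincide with the paper's own proof; you merely spell out more explicitly the one-liner the paper uses for the final clause, and you flag the same fresh-variable bookkeeping the paper dispatches by taking the minimal unused index. The proposal is correct and essentially identical in approach.
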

\begin{proof}
The set of formulas starting with $\exists$ is countable with its superset $F^S$, and can thus be written as $\disjbigcup_{j \in \Z^+} \left\{ \exists x_{\alpha \left( j \right)} \varphi_{\beta \left( j \right)} \right\}$.
\\
Define 
$
\Phi_{0} := \Phi
$,
and then recursively for $j=1,2,3, \ldots$
\begin{align}
\label{FirstNonFreeVar}
k_j:= \min \left\{ 
l \in \Z^{+} \st \ x_{l} \in  X \setminus \free \left( \Phi_{j-1} \cup
\left\{ \exists x_{\alpha (j)} \varphi_{\beta (j) } \right\}
\right) 
\right\} 
\\
\label{WitnessInclusion}
\Phi_j := 
\begin{cases}
\Phi_{j-1} \cup \left\{ \varphi_{\beta \left( j \right)} 
\frac{
x_{
k_{j}
}
}
{
x_{\alpha \left( j \right)}
}
\right\}
& \text{ if } \con \Phi_{j-1} \cup  \left\{ \exists x_{\alpha \left( j \right)} \varphi_{\beta \left( j \right)} \right\}
\\
\Phi_{j-1} & \text{ otherwise}
\end{cases}.
\end{align}
First, the soundness of this algorithm can be checked recursively by showing that each $ X \setminus \free \left(  \Phi_{j}  \right) $ is countable, so that 
in particular the least element in \eqref{FirstNonFreeVar} is taken over a non-empty subset of $\Z^{+}$. This easy step is omitted here.
\\
Secondly, each $\Phi_j$ is easily proven consistent by applying \ref{SingleWitnessInclusion} to first branch of \eqref{WitnessInclusion} each time it is employed. Also, since $\Phi_0 \subseteq \Phi_1 \subseteq \Phi_2 \subseteq \ldots$, each finite subset of $\mathcal{W}_{D} \left( \Phi \right) := \bigcup_{j \in \Z^{+}} \Phi_j$ is a subset of some $\Phi_j$, and thus consistent. This makes $\mathcal{W}_{D} \left( \Phi \right)$ consistent as well. Note that the outcome of the present construction can depend on how the exist statements are initially sorted, i.e.\ on $\alpha$ and $\beta$ (cfr. the parallel situation for $\mathcal{M}^+(\Phi)$ in \ref{MPlusCovering}).
\\
Finally, the last thesis descends from the fact that any consistent superset of $\mathcal{W}_{D} \left( \Phi \right)$ cannot contain any exist-formula whose witness hasn't already been added in \eqref{WitnessInclusion}.
\end{proof}
\section{Putting it all together. Satisfiability and completeness theorems. Adequacy of sequent calculus}
\begin{prop}[Fundamental satisfiability result]
\label{PropSatisfiability}
\FPeval{RuleChoice}{round(2^AssRuleCounter + 2^AntRuleCounter + 2^ReflRuleCounter + 2^SubstRuleCounter + 2^EARuleCounter +2^ESRuleCounter + 2^NorRuleCounter + 2^NorCommRuleCounter + 2^PCtrRuleCounter:0)}
\begin{description}
\mbox{}
\item[Hypothesis]
\mbox{}
\begin{enumerate}
\item
$D \supseteq \FPprint\RuleChoice$
\item
\label{HypConsistency}
$\Phi \subseteq F^S$ is $D$-consistent.
\item
\label{HypFreeCountable}
$ X \setminus \free\left( \Phi \right)$ is countable.
\end{enumerate}
\item[Thesis]
$\Phi$ is satisfiable.
\end{description}
\end{prop}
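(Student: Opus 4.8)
The plan is a three-stage Henkin construction — first enrich $\Phi$ with witnesses, then with a maximal consistent choice of literals, and finally read off a model — where the rule set fixed in the first hypothesis is exactly what the three stages consume, and which notably omits \ref{NCtrRule}. First I would apply the Witness Adjunction Construction \ref{LemmaWitnessInclusion} to $\Phi$: hypotheses \ref{HypConsistency} and \ref{HypFreeCountable} supply $D$-consistency and countability of $X\setminus\free(\Phi)$, and $D$ contains the needed \ref{AntRule}, \ref{ReflRule}, \ref{EARule}, \ref{ESRule}, \ref{PCtrRule}. This yields a set $\mathcal{W}_D(\Phi)\supseteq\Phi$ that is consistent and witness-furnished, and moreover such that every consistent superset of it is again witness-furnished.

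Next I would maximize by setting $\Psi:=\mathcal{M}^+_D(\mathcal{W}_D(\Phi))$ as in \ref{DefMaximizationConstructions}. By \ref{MPlusCovering} this is a minimal covering extending $\mathcal{W}_D(\Phi)$, and by \ref{MPlusConsistency} (which needs only \ref{AntRule} and \ref{PCtrRule}) it remains $D$-consistent. Being a consistent covering, $\Psi$ is $D$-maximal, so by the forward implication of \ref{LemmaMaximalCovering} it is also $D$-expanded. Finally, since $\Psi$ is a consistent superset of $\mathcal{W}_D(\Phi)$, the second clause of \ref{LemmaWitnessInclusion} makes it witness-furnished. Thus $\Psi$ is simultaneously $D$-expanded, a minimal covering, and witness-furnished — precisely the three hypotheses demanded by \ref{LemmaFreeInterpretationModel}.

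The final stage is then immediate. The Henkin interpretation $\mathcal{I}_{(\Psi,D)}$ is defined because $D\supseteq\{\ref{AssRule},\ref{AntRule},\ref{ReflRule},\ref{SubstRule}\}$ and $\Psi$ is expanded; applying \ref{LemmaFreeInterpretationModel} (whose further rule requirements \ref{NorRule}, \ref{NorCommRule}, \ref{ESRule} are all present) gives $\mathcal{I}_{(\Psi,D)}\models\varphi\Leftrightarrow\varphi\in\Psi$ for every $\varphi\in F^S$. Since $\Phi\subseteq\mathcal{W}_D(\Phi)\subseteq\Psi$, the interpretation $\mathcal{I}_{(\Psi,D)}$ models all of $\Phi$, and therefore $\Phi$ is satisfiable.

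The point to watch is the ordering of the two enrichment steps: witnesses must be added before maximizing, since it is precisely the consistency of the maximal extension that certifies its witness-furnishedness through the ``every consistent superset'' clause, whereas inserting witnesses after maximization could break either the covering property or consistency. The bookkeeping that deserves care is checking that maximization both preserves consistency and upgrades it to expandedness without spoiling the witness property, and that the whole chain runs on \ref{PCtrRule} alone, never invoking \ref{NCtrRule} — which is exactly the asymmetry between satisfiability and completeness that the paper is built to expose.
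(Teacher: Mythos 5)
Your proposal is correct and follows essentially the same route as the paper's own proof: witness adjunction via \ref{LemmaWitnessInclusion} first, then maximization with $\mathcal{M}^+_D$, consistency and covering from \ref{MPlusConsistency} and \ref{MPlusCovering}, expandedness from \ref{LemmaMaximalCovering}, witness-furnishedness of the maximal extension from the ``every consistent superset'' clause, and finally \ref{LemmaFreeInterpretationModel} applied to the Henkin interpretation. Your closing remark on the necessity of ordering the two enrichment steps is a point the paper leaves implicit, but the argument itself is the same.
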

\begin{proof}
\mbox{}
\begin{enumerate}
\item
$D$ contains enough rules to allow building $\Phi':= \mathcal{M}_D^+ \left( \mathcal{W}_D \left( \Phi \right) \right)$; this is the only step in which hypotheses \ref{HypFreeCountable} and \ref{HypConsistency} are used.
\item
Now $\Phi'$ is consistent because $\mathcal{W}_D$ preserves consistency and thanks to \ref{MPlusConsistency}. It is a covering, too (\ref{MPlusCovering}). So, by definition, it is $D$-maximal and, descending from \ref{LemmaMaximalCovering}, thus both expanded and a minimal covering.
\item
At this point, expandedness permits (see
\ref{LemmaFreeInterpretationConstruction} and \ref{DefPreservation}) the construction of the Henkin interpretation $\mathcal{I}_{\left( \Phi', D  \right)}$.
\item
Finally, $\Phi'$ is witness-furnished thanks to last thesis of \ref{LemmaWitnessInclusion}, so we can apply \ref{LemmaFreeInterpretationModel} and get that $\Phi'$ is satisfied by $\mathcal{I}_{\left( \Phi', D \right)}$.
\end{enumerate}
\mbox{}\\
And so $\Phi$ is.
\end{proof}
It is time, in the end, to see that hypothesis \ref{HypFreeCountable} was a mere technical device to produce the proof of \ref{PropSatisfiability}, which can be restated without it, now that it has been granted.
\begin{teo}[Satisfiability Theorem]
\label{TeoSatisfiability}
\FPeval{RuleChoice}{round(2^AssRuleCounter + 2^AntRuleCounter + 2^ReflRuleCounter + 2^SubstRuleCounter + 2^EARuleCounter +2^ESRuleCounter + 2^NorRuleCounter + 2^NorCommRuleCounter + 2^PCtrRuleCounter:0)}
\begin{description}
\mbox{}
\item[Hypothesis]
\mbox{}
\begin{enumerate}
\item
$D \supseteq \FPprint\RuleChoice$
\item
$\Phi \subseteq F^S$ is $D$-consistent.
\end{enumerate}
\item[Thesis]
$\Phi$ is satisfiable.
\end{description}
\end{teo}
\begin{proof}
\rnote{Da sistemare, ma mi servono definizioni nella sintassi}
\informal{The idea is to cast all the free variables occurring in $\Phi$ into new constants to adjoin to $S$, so to be able to apply \ref{PropSatisfiability}.}
Consider $S':= S \disjcup \left\{ c_1, c_2, \ldots \right\}$. For each $\varphi \in \Phi$ build the formula $\varphi'$ obtained by substituting each occurrence of a free variable $x_j$ with $c_j$. Call $\Phi'$ the set of formulas thus obtained.
\begin{description}
\item[$\Phi'$ is consistent:] 
Take any finite subset $\left\{ \nple{\varphi'}{n} \right\}$, and compare it with the corresponding $\left\{ \nple{\varphi}{n} \right\}$. 
The latter is satisfiable as a set of $S$-formulas by virtue of \ref{PropSatisfiability}, and be $\mathcal{I}$ a model for it. 
Turn $\mathcal{I}$ into a $S'$-interpretation $\mathcal{I}'$ by setting $\mathcal{I}' \left( c_j \right) := \mathcal{I} \left( x_j \right)$. 
Thanks to how we defined $\varphi'$, we have $\mathcal{I}' \left( \varphi' \right) = \mathcal{I} \left( \varphi \right) = \true $, so $\mathcal{I}'$ is a model for $\left\{ \nple{\varphi'}{n} \right\}$. 
Thus the whole $\Phi'$ is consistent.
\end{description}
Let then $\mathcal{I}''$ be a model of $\Phi'$. 
Since $\free \left( \Phi '  \right) = \emptyset$, we can request that $\mathcal{I}'' (x_j) = \mathcal{I}'' (c_j) \ \forall j \in \Z^+$.  
By the same reasoning done for $\mathcal{I}'$, it is shown that $\mathcal{I}'' \left( \varphi' \right) = \mathcal{I}'' \left( \varphi \right)= \true \ \forall \varphi \in \Phi$.
\\
So $\Phi$ is satisfiable.
\end{proof}
\begin{cor}
\FPeval{RuleChoice}{round(2^AssRuleCounter + 2^AntRuleCounter + 2^ReflRuleCounter + 2^SubstRuleCounter + 2^EARuleCounter +2^ESRuleCounter + 2^NorRuleCounter + 2^NorCommRuleCounter + 2^PCtrRuleCounter:0)}
\label{PropCompletenessWRTConsistency}
If $\FPprint\RuleChoice \subseteq D^+ \subseteq D$, then
\begin{align*}
\con_{D^+} \Phi \Leftrightarrow \con_D \Phi
\end{align*}
\end{cor}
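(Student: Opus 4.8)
The plan is to prove the two implications separately: one is a triviality coming from monotonicity of derivability in the rule set, while the other is precisely where the satisfiability and consistency theorems are brought to bear against each other.

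First I would dispose of the direction $\con_D \Phi \Rightarrow \con_{D^+} \Phi$, which in fact needs no hypothesis beyond $D^+ \subseteq D$. The point is that admitting more rules can only produce more theorems: every $D^+$-derivation is also a $D$-derivation, so $\Phi \proves_{D^+} \psi$ implies $\Phi \proves_{D} \psi$ and hence $\{\psi : \Phi \proves_{D^+} \psi\} \subseteq \{\psi : \Phi \proves_{D} \psi\}$. If the larger set meets every pair $\{\varphi, \nor\varphi\varphi\}$ in at most one element, so does the smaller one, which is exactly the assertion $\con_{D^+}\Phi$.

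The content of the corollary lies in the reverse direction $\con_{D^+}\Phi \Rightarrow \con_D\Phi$, and here the hypothesis that $D^+$ already contains the nine-rule set $511$ is indispensable. Assuming $\con_{D^+}\Phi$, the Satisfiability Theorem \ref{TeoSatisfiability} applies verbatim, its rule requirement being precisely $511$, and yields that $\Phi$ is satisfiable. Satisfiability is a semantic statement, blind to the choice of deduction rules. I would then close the argument with the Consistency Theorem \ref{TheoConsistency}: since $D$ is a subset of the basic rules, all of which are correct, $D$ is a correct rule set, and the satisfiable $\Phi$ is therefore $D$-consistent.

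The crux is thus not a calculation but the recognition of an asymmetry. Consistency transfers downward along $D^+ \subseteq D$ for free, but to transfer it upward one must detour through a model: the $D^+$-consistent $\Phi$ is materialized as an honestly satisfiable set, then read back as consistent under the richer $D$. The single place the argument could break is the invocation of \ref{TeoSatisfiability}; this is why the hypothesis anchors $D^+$ above $511$ rather than permitting an arbitrary pair of rule sets. Composing the two theorems already in hand completes the proof with no further work.
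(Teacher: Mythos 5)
Your proposal is correct and follows exactly the paper's own argument: the direction $\con_D\Phi\Rightarrow\con_{D^+}\Phi$ is the trivial monotonicity observation (which the paper leaves implicit behind ``it suffices to show left-to-right''), and the substantive direction is obtained by passing through a model via Theorem~\ref{TeoSatisfiability} and returning by correctness via Theorem~\ref{TheoConsistency}. No gaps; your write-up is if anything slightly more explicit than the paper's two-line proof.
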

\begin{proof}
It suffices to show left-to-right implication.
\\
$\con_{D^+} \Phi$ implies there is a model $\mathcal{I}$ for $\Phi$. Correctness of sequents of $D$ implies (\ref{TheoConsistency}) that $\con_D \Phi$.
\end{proof}
As a major corollary of \ref{TeoSatisfiability}, we now show completeness of first-order calculus. While the former is a constructive result, the latter is highly non-constructive, as it is fundamentally derived from \ref{TeoSatisfiability} using contradiction rules.
\begin{teo}[G\"odel's Completeness Theorem]
\label{TheoCompleteness}
\FPeval{RuleChoice}{round(2^AssRuleCounter + 2^AntRuleCounter + 2^ReflRuleCounter + 2^SubstRuleCounter + 2^EARuleCounter +2^ESRuleCounter + 2^NorRuleCounter + 2^NorCommRuleCounter + 2^PCtrRuleCounter:0)}%
\FPeval{var}{round(2^AssRuleCounter + 2^AntRuleCounter + 2^ReflRuleCounter + 2^SubstRuleCounter + 2^EARuleCounter +2^ESRuleCounter + 2^NorRuleCounter + 2^NorCommRuleCounter + 2^PCtrRuleCounter +2^NCtrRuleCounter :0)}
\begin{description}
\mbox{}
\item[Hypothesis]
$D^+ \supseteq \FPprint\RuleChoice$, $D \supseteq \FPprint\var$,
$\Phi \models \varphi$.
\item[Thesis]
\mbox{}
\begin{enumerate}
\item
If $\varphi$ is negative, then it is provable in $D^+$.
\item
$\varphi$ can be proved in $D$, and there exists a proof of it whose all derivation rules except at most the last are in $D^+$.
\end{enumerate}
\end{description}
\end{teo}
\begin{proof}
\mbox{}
\begin{enumerate}
\item
$\varphi$ is of the form $\lnot \psi$. By contradiction, suppose $\Phi \not \proves_{D^+} \lnot \psi$. By \ref{ConsistencyVSImplication}, $\con_{D^+} \Phi \cup \left\{ \psi \right\}$, thus, using \ref{TeoSatisfiability}, there is a model $\mathcal{I}$ for $\Phi \cup \left\{ \psi \right\}$, so that $\mathcal{I} \left( \psi \right)= \true$. But $\mathcal{I}$ is also a model of $\Phi$, so by hypothesis $\mathcal{I} \left( \lnot \psi \right) = \true$.
\item
$\Phi \models \varphi \Rightarrow \Phi \models \lnot \lnot \varphi$. Applying previous point, we find $\Phi \proves_{D^+} \lnot \lnot \varphi$. Then we use rule \eqref{NotNotRemove} to finish.
\end{enumerate} 
\end{proof}

\appendix

\section{Generalities on semigroups}
\label{SectSemigroups}
Consider a semigroup $\left( M, \circ \right)$.
By virtue of its associativity, the binary operation $\circ$ can be seen also as a function from 
$\bigcup_{j=2}^{+ \infty} { M  }^{j}  $ into $M $. That is, semigroup operation can unambiguously be applied to any $n$-ple, becoming an overloaded function.
When wanting to stress this, functional notation can be employed: rather than using infix notation commonly adopted for semigroup operation: $x_1 \circ x_2$, one writes, for the generic $n$-ple: $\circ \left( \left( x_1, \ldots, x_n \right) \right)$. 
This will be handy in the sequel, besides being more adaptable to the variable argument length just established.
When, on the other hand, normal infix (operational) notation is used, and clarity is safe, the operation symbol $\circ$ will be possibly omitted.
\\
One can push things a little further: consider that in functional notation the associativity is expressed thus:
\begin{align}
\label{FunctionalAssociativity}
\circ\left( \circ \left( \left\{ a \right\} \right) \cartprod \left\{ b \right\} \right) = \circ \left( \left\{ a \right\} \cartprod \left\{ b \right\} \right)
&& \forall a \in \bigcup_{j=2}^{+ \infty} { M  }^{j}, \ b \in \bigcup_{j=1}^{+ \infty} { M  }^{j}.
\end{align}
It then comes natural to extend the function $\circ$ to the whole $\bigcup_{j=1}^{+ \infty} { M  }^{j}$ domain by letting $\circ \left( a \right) = a \ \forall a \in M$, because doing so \eqref{FunctionalAssociativity} results extended as well.\\
So when $\circ$ is used in functional notation, it will be meant to denote this latter extended function.
\begin{defn}
\label{UnambigDef}
A $n$-ple $\left( A_1,\ldots,A_n \right)$ of subsets of $M$ is said unambiguous iff the restriction $\left. \circ \right|_{\prod_{j=1}^n A_{j}}$ is injective.
\end{defn}
\begin{oss}
Definition \ref{UnambigDef} is satisfied in the trivial case of any of the sets $A_{1}, \ldots, A_n$ being empty (any function is injective on empty domain).
\end{oss}
\begin{defn}
A subset $A \subseteq M$ is unambiguous iff $\forall B \subseteq M \, \left( A, B \right)$ is unambiguous; or, equivalently, iff $\left( A, M \right)$ is unambiguous.
\end{defn}
\begin{oss}
\label{UnambigPropagation}
If $\left( A_1, \ldots , A_n \right)$ is a $n$-ple  of not necessarily all distinct subsets of  $M$, each unambiguous, then $\left( \underbrace{A, \ldots, A}_{n \text{ times}}, B \right)$ is unambiguous $\forall B \subseteq M, n \in \Z^+$, which implies that $\circ \left( \prod_{j=1}^n A_j \right)$ is unambiguous.
\end{oss}
\begin{proof}
By induction on $n$.
\end{proof}
\subsection{Unambiguous generators, homomorphic extensions and string substitution}
It is useful to have a set which somehow generates the whole semigroup through its operation; even better if this can be done in only one way.
\begin{defn}
Call $A \subseteq M$ a generator iff $\circ \left|{_{\disjbigcup_{j=1}^{+\infty} A^j}}\right.$ is onto $M$, an unambiguous generator iff is a bijection on $M$.
\end{defn}
\begin{oss}
An unambiguous generator is a generator.
\end{oss}
An unambiguous generator is indeed useful first of all because one can extend a map on it onto the whole semigroup, in the expected manner.
\begin{defn}[Homomorphic extension]
\label{DefHomomorphicExtension}
Given an unambiguous generator $A \subseteq M$ and $f: A \to M$, set 
\begin{align*}
f_{\cartprod}: \disjbigcup_{j=1}^{+ \infty} A^j \ni \left( \nple x n \right)
\mapsto \left( f\left( x_1 \right), \ldots, f\left( x_n \right) \right) \in  \disjbigcup_{j=1}^{+ \infty} M^j  
\end{align*} 
and define the homomorphic extension of $f$ as
\begin{align*}
f_A \left( y \right) := \circ \left( f_{\cartprod} \left( {
\left( \circ \left|_{\disjbigcup_{j=1}^{+\infty} A^j}  \right.  \right)
}^{-1} \left( y \right) \right) \right)
\end{align*}
\end{defn}
It is easy to check that it is defined on the whole $M$ and that it is an extension of $f$.
\informal{
\ref{DefHomomorphicExtension} formally states what becomes the very intuitive concept of substitution as soon as $M$ is the free monoid over the alphabet $A$, which will be the case of our interest in \ref{SectionFormalStructure}: take any string and substitute each occurrence in it of a certain letter with a certain string.
}
In case $f$ is a finite substitution, i.e. there is a finite $F=\left\{ \nple{a}{n} \right\} \subseteq A$ such that $f \left|_{A \setminus F} \right.$ is the identity, which will be the case occurring in the sequel, we will usually employ the alternative notation
\begin{align*}
\varphi
\begin{aligned}
f(a_1) && \ldots && f(a_n)
\\
\hline
a_1 && \ldots && a_n
\end{aligned}
:= f_A \left( \varphi \right)
\end{align*}
\section{Syntax}
\label{SectSyntax}
First-order syntax construction can be found in many references. Here, it is quickly introduced as it will be the material for the whole treatment. The subject is a standard one, so little remarks are added. There are some non-standard variations here: first, only one logical operator symbol is used, namely $\nor$ representing nor binary function (true if and only if both its arguments are false); second, polish notation is adopted; and third, to save words and symbols when having to distinguish between function, relation, and constant symbols of the first-order alphabet, as a mere technical device, we will associate to each symbol a signed-arity ($\#$), whose sign immediately tells about the matter; lastly, only one quantifier is used. These solutions are aimed at simplifying tractation (especially in the proofs of sections ) and making statements shorter, albeit maybe less readable at times.
\subsection{Formal structure}
\label{SectionFormalStructure}
A formal structure is a couple $(S, \#)$ made of an arbitrary, possibly empty, set $S$ (the symbol set) and of a function $\# \colon S \to \Z$ called signed-arity. $s \in S$ is named, respectively:
\begin{itemize}
\item
A constant iff $\#\left( s \right) = 0$
\item
A $\#\left( s \right)$-ary function (or operation) symbol iff $\#\left( s \right) > 0$
\item
A $\left| \#\left( s \right) \right|$-ary relation symbol iff $\#\left( s \right) < 0,$
\end{itemize}
so that $S$ is partitioned into constant, function and relation symbols.
\subsection{Terms and formulas}
\label{TermsAndFormulas}
Take another infinite set $X$ (the variables symbol set) and a further, disjoint set $\left\{ \nor, \equiv , \exists \right\}$, both disjoint with $S$. Call
\begin{align*}
\overline{S} := S \disjcup X \disjcup \left\{ \nor , \equiv , \exists \right\}
\end{align*}
the alphabet, and $S \disjcup X \disjcup \left\{ \equiv  \right\} \subset \overline{S} $ the non-logical alphabet. 
\\
Consider the free semigroup
\begin{align*}
\left( {\overline{S}}^{+} , \circ \right),
\end{align*}
that is, the strings of one or more letters of $\overline{S}$, with string concatenation $\circ$ as a semigroup operation.

Here is shown how to build a couple of subsets of this semigroup in a way such that, when back to semantics, a meaning can be associated to each element of those subsets. 
That is, the strings hereby constructed will have the right structure to be meaningful when each of its characters gets endowed with a `content' and becomes a symbol of it. These strings are indeed exactly all the strings enjoying this feature.
\\
The last preparatory step is the extension of signed-arity function to all the non-logical symbols, by defining
\begin{align*}
\overline{\#} (s) := 
\begin{cases}
\#(s) & s \in S
\\
0 & s \in X
\\
-2 & s \in \left\{ \equiv \right\}
\end{cases}
\end{align*}
\\
Consider finally two auxiliary mappings $\tau, \phi : \powset{\overline{S}^{+}} \to \powset{\overline{S}^{+}}$ acting the following way:
\begin{align*}
\tau \colon Y & \mapsto Y \cup \bigcup_{s \in {\overline{\#}}^{-1} \left( \Z^+ \right)} 
\circ \left( \left\{ s \right\} \cartprod Y^{\overline{\#} \left( s \right) } \right)
\\
\phi \colon Y & \mapsto 
Y \cup \circ \left( \left\{ \nor \right\} \cartprod Y \cartprod Y \right) \cup \circ \left( \left\{ \exists \right\}  \cartprod X \cartprod Y \right)
\end{align*}
Call terms of depth $0$ (or atomic terms) the set $T_0^S := {\overline{\#}} ^ {-1} \left( \left\{ 0 \right\} \right)$, and terms of depth $\leq n \in \Z^+$ the set 
\begin{align*}
T_n^S := \tau^{n} \left( T_{0}^{S} \right);
\end{align*}
here $\tau ^n$ is the $n$-th iterate of the function $\tau$. 
\\
Call terms the union of the terms of any depth:
\begin{align*}
T^S := \bigcup_{j \in \N} T_j^S = \lim_{j \to + \infty} T^S_{j}
\end{align*}
\begin{oss}
\label{TermSymbols}
$\forall n \in \N \  T^S_{n} \subseteq T^S_{n+1} \subseteq T^S \subseteq \left( {\overline{\#}}^{-1} \left( \N \right) \right)^{+}$
\end{oss}
Similarly, call formulas of depth $0$ (or atomic formulas) the set
\begin{align*}
F_0^S := \bigcup_{s \in {\overline{\#}}^{-1} \left( \Z^- \right)} 
\circ \left( \left\{ s \right\} \cartprod \prod_{j = 1}^{\left| \overline{\#}\left( s \right) \right|} T^S \right),
\end{align*}
and formulas of depth $\leq n \in \Z^+$ the image of $F^S_0$ throught the $n$-th iteration of $\phi$:
\begin{align*}
F_n^S := \phi ^ n \left( F_0^S \right).
\end{align*}
Finally, call formulas the union of formulas of any depth
:
\begin{align*}
F^S := \bigcup_{j \in \N} F_j^S = \lim_{j \to + \infty} F^S_{j} .
\end{align*}
\begin{defn}
For concision's sake, we will often use the shortcut character $\lnot$ and write $\lnot \varphi$ instead of $\nor \varphi \varphi$, where $\varphi$ is any formula. One can also consider $\lnot$ as a meta-symbol denoting the mapping $\varphi \mapsto \nor \varphi \varphi$.
\end{defn}
Of course, terms and formulas can be partitioned according to their depth via obvious definitions:
\begin{defn}
\begin{align*}
\overline{T}^S_0 &:= T^S_0 & \overline{F}^S_0 &:= F^S_0 &
\\
\overline{T}^S_n &:= T^S_n \setminus T^S_{n-1} & \overline{F}^S_n &:= F^S_n \setminus F^S_{n-1} ,& n \in \Z^+
\end{align*}
\end{defn}
\begin{oss}
\mbox{}
\begin{itemize}
\item
$\left\{ \overline{T}^S_{n}  \right\}_{n \in \N}$ is a partition of $T^S$.
\item
$\forall n \in \N$
\begin{align}
\overline{T}^S_{n+1} \subseteq \disjbigcup_{s \in {\#}^{-1} \left( \Z^{+} \right)} 
\circ \left( \left\{ s \right\} \cartprod {\left( T^S_n \right)}^{\#\left( s \right)} \right)
& \subseteq T^S_{n+1}
\end{align}
\end{itemize}
\end{oss}
Now it is not hard to guess how the depth of a term or formula will be introduced:
\begin{defn}
\label{DefDepth}
The depth of a term $t \in T^S$ is $\min\left( \left\{ n \st \ t \in T^S_n \right\} \right)$ or, equivalently, the natural number $n$ such that $t \in \overline{T}^S_n$. 
\\
In the same manner, the depth of a formula $\varphi \in F^S$ is $\min\left( \left\{ n \st \ \varphi \in F^S_n \right\} \right)$ or, equivalently, the natural number $n$ such that $\varphi \in \overline{F}^S_n$. 
\end{defn}
\begin{oss}
Alternatively, the depth of a formula can be recursively defined as
\begin{align*}
\operatorname{depth}\left( F^S_0 \right) &:= \left\{ 0 \right\}
\\
\operatorname{depth} \left( \nor \varphi_1 \varphi_2 \right) &:= 
\max \left( \operatorname{depth} \left( \left\{ \varphi_1, \varphi_2 \right\} \right) \right) + 1
\\
\operatorname{depth} \left( \exists x  \varphi \right) &:=  
\operatorname{depth} \left( \varphi \right) + 1,
\end{align*}
and analogously for terms.
\end{oss}
\comment{
\begin{defn}
Since the functions depth of a term and $\overline{\#}$ agree on the meet of their domains, the symbol $\overline{\#}$ can be overloaded to denote both the signed arity of a symbol and the depth of a term, thus extending its domain to $S \cup T^{S} \cup \left\{ \equiv \right\}$. 
\end{defn}
}
\begin{oss}
\begin{align}
\label{TermsFullPartition}
T^S &= T^S_0 \disjcup \disjbigcup_{\substack{s \in {\#}^{-1} \left( \Z^{+} \right) \\ n \in \N}} \left( \circ \left( \left\{ s \right\} \cartprod {\left( T^S_n \right)}^{\#\left( s \right)} \right) \cap \overline{T}^S_{n+1} \right)
\end{align}
\end{oss}
\subsection{Subterms and subformulas}
\begin{lemma}
\label{TermsPartition}
$\forall n \in \N$
\begin{align*}
T^S_{n+1} = T^S_0 \disjcup \bigcup_{s \in {\#}^{-1} \left( \Z^{+} \right)} 
\circ \left( \left\{ s \right\} \cartprod {\left( T^S_n \right)}^{\#\left( s \right)} \right)
\end{align*}
\end{lemma}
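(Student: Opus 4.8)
The plan is to reduce the identity to the defining recursion $T^S_{n+1} = \tau^{n+1}\left( T^S_0 \right) = \tau\left( T^S_n \right)$ and then to match the two sides. Unwinding the definition of $\tau$ gives at once
\begin{align*}
T^S_{n+1} = \tau\left( T^S_n \right) = T^S_n \cup \bigcup_{s \in {\#}^{-1}\left( \Z^+ \right)} \circ\left( \left\{ s \right\} \cartprod \left( T^S_n \right)^{\#\left( s \right)} \right),
\end{align*}
so the only discrepancy with the claimed right-hand side is that the first summand reads $T^S_n$ rather than $T^S_0$. Since $T^S_0 \subseteq T^S_n$ (cfr. \ref{TermSymbols}) and each block $\circ\left( \left\{ s \right\} \cartprod \left( T^S_n \right)^{\#\left( s \right)} \right)$ already appears on the right, the inclusion of the stated right-hand side into $T^S_{n+1}$ is immediate; the whole content of the equality is therefore the reverse inclusion $T^S_n \subseteq T^S_0 \cup \bigcup_s \circ\left( \left\{ s \right\} \cartprod \left( T^S_n \right)^{\#\left( s \right)} \right)$.

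I would prove the equality directly by induction on $n$. The base case $n = 0$ is nothing but the definition of $\tau$ applied to $T^S_0$, since there $T^S_n = T^S_0$. For the inductive step I would substitute the inductive hypothesis $T^S_n = T^S_0 \cup \bigcup_s \circ\left( \left\{ s \right\} \cartprod \left( T^S_{n-1} \right)^{\#\left( s \right)} \right)$ into the displayed expansion of $T^S_{n+1}$; the monotonicity $T^S_{n-1} \subseteq T^S_n$ makes the block built over $T^S_{n-1}$ get absorbed into the block built over $T^S_n$, leaving exactly $T^S_0 \cup \bigcup_s \circ\left( \left\{ s \right\} \cartprod \left( T^S_n \right)^{\#\left( s \right)} \right)$, as wanted.

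It then remains to justify the $\disjcup$ sign, i.e.\ the disjointness. Here I would work inside the free semigroup $\overline{S}^+$, where every string has a well-defined length and leading letter. The atomic terms making up $T^S_0 = {\overline{\#}}^{-1}\left( \left\{ 0 \right\} \right)$ are single letters, hence strings of length $1$, whereas every element of $\circ\left( \left\{ s \right\} \cartprod \left( T^S_n \right)^{\#\left( s \right)} \right)$ begins with the letter $s$ and is followed by $\#\left( s \right) \geq 1$ terms, hence has length at least $2$ and leading letter $s \in {\#}^{-1}\left( \Z^+ \right)$. Comparing lengths separates $T^S_0$ from all the function-headed blocks (giving the stated disjointness), while comparing leading letters separates the blocks attached to distinct symbols. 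The argument is, on the whole, a matter of unwinding definitions; the only point demanding any care is precisely this last one, where one must lean on the freeness of $\overline{S}^+$ to know that length and leading symbol are genuine invariants of a string.
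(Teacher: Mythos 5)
Your proof is correct. Note that the paper states Lemma \ref{TermsPartition} with no proof at all, treating it as an immediate consequence of the definition $T^S_{n+1} = \tau\left( T^S_n \right)$; your argument supplies exactly what is being taken for granted there. The three ingredients you use are the right ones: the induction on $n$ is genuinely needed to replace the summand $T^S_n$ produced by $\tau$ with $T^S_0$ (a single application of the definition only gives $T^S_{n+1} = T^S_n \cup \bigcup_{s} \circ\left( \left\{ s \right\} \cartprod \left( T^S_n \right)^{\#\left( s \right)} \right)$, and pushing $T^S_n$ down to $T^S_0$ requires the absorption step you describe); and the disjointness asserted by the $\disjcup$ sign is correctly reduced to the length comparison between the single-letter strings of $T^S_0 = \overline{\#}^{-1}\left( \left\{ 0 \right\} \right)$ and the strings of length at least $2$ in each block $\circ\left( \left\{ s \right\} \cartprod \left( T^S_n \right)^{\#\left( s \right)} \right)$. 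Your additional observation that distinct blocks are separated by their leading letter is not required by the statement (the inner union is a plain $\bigcup$), but it is the content of the subsequent Lemma \ref{TermsUnambig}, so it does no harm here.
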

\begin{lemma}
\label{TermsUnambig}
$T^S_n$ is unambiguous in $ {\overline{S}}^{+}  \; \forall n \in \N$.
\end{lemma}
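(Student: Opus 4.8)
The plan is to induct on $n$, using the structural decomposition of $T^S_{n+1}$ furnished by Lemma \ref{TermsPartition} together with the propagation principle of \ref{UnambigPropagation}. Recall (Definition \ref{UnambigDef}) that to say $T^S_n$ is unambiguous in $\overline{S}^{+}$ means that $\circ$ restricted to $T^S_n \cartprod \overline{S}^{+}$ is injective; so I must show that $t \circ b = t' \circ b'$ with $t, t' \in T^S_n$ and $b, b' \in \overline{S}^{+}$ forces $t = t'$ and $b = b'$.

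For the base case $n = 0$ I would note that $T^S_0 = \overline{\#}^{-1}(\{0\})$ consists entirely of single-letter strings. If $t \circ b = t' \circ b'$ with $t, t'$ each of length one, then $t$ and $t'$ are both the first character of the common string, hence equal; left-cancellation in the free semigroup $\overline{S}^{+}$ then yields $b = b'$. Thus $T^S_0$ is unambiguous.

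For the inductive step, assume $T^S_n$ is unambiguous and write, via Lemma \ref{TermsPartition}, $T^S_{n+1} = T^S_0 \disjcup \bigcup_{s \in \#^{-1}(\Z^{+})} \circ(\{s\} \cartprod (T^S_n)^{\#(s)})$. Given $u \circ b = u' \circ b'$ with $u, u' \in T^S_{n+1}$, I would first compare the leading characters of the two sides, which must coincide. The leading character records which block of the partition a term lies in: a member of $T^S_0$ begins with a symbol of arity $0$, whereas a member of the $s$-block begins with the function symbol $s$ of positive arity. Since distinct symbols are distinct letters and the sign of the arity separates the two kinds, equality of the first characters forces $u$ and $u'$ into the same block. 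If both lie in $T^S_0$ this is the base-case argument again; if both lie in the $s$-block then $u = s\, t_1 \cdots t_k$ and $u' = s\, t'_1 \cdots t'_k$ with $k = \#(s)$ and all $t_i, t'_i \in T^S_n$, and cancelling the common prefix $s$ reduces the equation to $t_1 \cdots t_k\, b = t'_1 \cdots t'_k\, b'$.

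To finish I would invoke \ref{UnambigPropagation}: since each factor $T^S_n$ is unambiguous by the inductive hypothesis, the tuple $(\underbrace{T^S_n, \ldots, T^S_n}_{k}, \overline{S}^{+})$ is unambiguous, i.e.\ $\circ$ is injective on $(T^S_n)^{k} \cartprod \overline{S}^{+}$. Applying this to the last equality gives $t_i = t'_i$ for every $i$ and $b = b'$, whence $u = u'$ and $b = b'$, completing the induction. The one point that needs care --- and the only genuine obstacle --- is the \emph{across-block} comparison: one must be certain that the first character alone pins down the block, so that no ambiguity can arise between an atomic term and a compound one, or between two compound terms headed by different function symbols. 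This is exactly where the disjointness asserted in Lemma \ref{TermsPartition} and the arity bookkeeping of $\overline{\#}$ do their work; everything else is cancellation in the free semigroup together with the mechanical propagation of unambiguity from the pieces to their concatenation.
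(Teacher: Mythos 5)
Your proof is correct and follows essentially the same route as the paper's: induction on $n$, with the base case settled by $T^S_0$ consisting of length-one strings, and the inductive step reading off the leading character to locate $t,t'$ in the same block of the decomposition from Lemma \ref{TermsPartition}, then closing via \ref{UnambigPropagation}. The only cosmetic difference is that you explicitly cancel the prefix $s$ before applying the propagation remark to the tuple $(T^S_n,\ldots,T^S_n,\overline{S}^{+})$, whereas the paper applies it directly to $\circ\left(\left\{ s \right\} \cartprod \left( T^S_n \right)^{\#\left( s \right)}\right)$; these are the same argument.
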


\begin{proof}
By induction on $n$. Note that in a free semigroup any subset of elements all of the same length is obviously unambiguous. So $T^S_0$ is unambiguous.
\\
Now suppose $t \extprod[] y = t' \extprod[] y' =: z$ for some $ t, t' \in T^S_{n+1} \; , y, y' \in {\overline{S}}^+$, and to have shown that $T^S_n$ is unambiguous: the goal is to trigger induction by showing $ t=t'$ and $y=y'$.
Since $t, t'$ both have length at least $1$, they both have a first character, and this character is the first character of $z$. Call it $s$. Now the proof is by cases on $s$.
\begin{description}
\item[ $\overline{\#} \left( s \right) = 0$]
By \ref{TermsPartition} this means that both $t , t' \in T^S_0$, which is unambiguous, so $ t=t'$ and $y=y'$.
\item
[$\overline{\#} \left( s \right) \in \Z^+$]
By \ref{TermsPartition} this means that both 
$t, t' \in \circ \left( \left\{ s \right\} \cartprod {\left( T^S_{n} \right)}^{\#\left( s \right)} \right)$, which is unambiguous by \ref{UnambigPropagation}, so $ t=t'$ and $y=y'$.
\end{description}
By \ref{TermSymbols} this parses all the possibilities.
\end{proof}
\begin{teo}
$T^S$ is unambiguous.
\end{teo}
\begin{proof}
Suppose $t, t' \in T^S$ and $y, y' \in \overline{S}^+$ are such that $t y = t' y'$. Call $n$ the greater among the depths of $t$ and $t'$. Since $t, t' \in T^S_n$ and $T^S_n$ is unambiguous, it must be $t=t'$ and $y=y'$.
\end{proof}

Now the mapping from a term to its ordered subterms is introduced.
\begin{defn}
\label{DefSubtermExtractor}
For $t \in T^{S}$ define
\begin{align*}
\mathcal{S}_{S}\left( t \right) := 
\begin{cases}
t & \text{ if } t \in T_{0}^{S}
\\
\left( 
 \circ \left|{_
{
  \left( {\left( T_{\overline{\#}\left( t \right) - 1}^{S} \right)}^{ \overline{\#} \left( t \left( 1:1 \right) \right) } \right) 
}}\right.
\right)^{-1} \left( t \left( 2: \right) \right) & \text{ else}
\end{cases},
\end{align*}
where last assignment is sensible because
\begin{itemize}
\item
As soon as $\overline{\#} \left( t \right) \geq 1$
\begin{itemize}
\item
$t$ has length at least $2$.
\item
\begin{align*}
t\left( 2: \right) \in \circ 
\left(  {\left( T_{\overline{\#}\left( t \right) - 1}^{S} \right)}^{ \overline{\#} \left( t \left( 1:1 \right) \right) } \right)
\end{align*}
\item
$
 \overline{\#} \left( t \left( 1:1 \right) \right) \geq 1
$
\end{itemize}
\item
the restriction is performed over a unambiguous domain
\end{itemize}
\end{defn}
All the results of this section can be adapted and repeated for formulas.
\begin{oss}
\mbox{}
\begin{itemize}
\item
$T^S \cap F^S = \emptyset$
\item
$T^S \disjcup F^S$ is unambiguous.
\end{itemize} 
\end{oss}
\begin{proof}
Note that any string in $T^S \disjcup F^S$ has length $\geq 1$ and that the first character of any term differs from the first character of any formula. This grants both theses. 
\end{proof}
So the operator $\mathcal{S}_S$ can be extended to $T^S \disjcup F^S$. (details todo)
\rnote{
Da introdurre la nozione di occorrenza, di variabile libera, la sostituzione variabile-stringa con precauzioni sulle variabili non libere, il lemma di coincidenza e il lemma di sostituzione} 
\subsection{Families of formulas}
\label{SectFamiliesOfFormulas}
Let $\Phi$ be a generic subset of $F^S$. The following definitions have to do with how many formulas we can fit into $\Phi$, especially in the light of calculus of sequent defined in \ref{TenRules}.
\begin{defn}
$\Phi$ is said to be a covering iff $\forall \varphi \in F^{S}, \ \left\{ \varphi, \nor \varphi \varphi \right\} \cap \Phi \neq \emptyset$
\end{defn}
\begin{defn}
$\Phi$ is said to be patently inconsistent iff $\lnot \left( \Phi \right)  \cap \Phi \neq \emptyset$, that is, iff there is $\varphi \in F^S$ such that $\left\{ \varphi, \nor \varphi \varphi \right\} \subseteq \Phi$. 
\end{defn}
\begin{defn}
$\Phi$ is said to be a minimal covering iff $\forall \varphi \in F^{S}, \ \left\{ \varphi, \nor \varphi \varphi \right\} \cap \Phi \text{ has cardinality } 1 \Leftrightarrow \Phi $ is a covering, while no proper subset of it is $\Leftrightarrow \Phi $ is a covering and not patently inconsistent.
\end{defn}
The two equivalences embedded in the above definition are immediate.

\section{Semantics}
\label{SectSemantics}
\subsection{Interpretations}
Given a formal structure $S$, an interpretation of $S$ is a couple $\mathcal{I} := \left( A, \omega \right)$, of which $A$ (the universe of $\mathcal{I}$) is a set and $\omega \colon S \disjcup X  \to A \disjcup \disjbigcup_{n \in \Z^+} \left( A^{A^n} \disjcup {\truthvalueset}^{A^n} \right)$ satisfies
\begin{align*}
\omega \left( s \right) \in A^{A^n} & \Leftrightarrow \# \left( s \right) = n \in \Z^+
\\
\omega \left( s \right) \in {\truthvalueset}^{A^n} & \Leftrightarrow \# \left( s \right) = -n \in \Z^-
\\
\omega \left( s \right) \in A & \Leftrightarrow \overline{\#} \left( s \right) = 0
\end{align*}
If $\mathcal{I} := \left( A, \omega \right)$ is an interpretation of the formal structure $S$, $\left(A, \left. \omega \right|_{S}\right)$ is said to be an $S$-structure, and $\left(A, \left. \omega \right|_{X} \right)$ an assignment, so that giving an $S$-structure and an assignment both over the same universe equals giving an interpretation of $S$.
\subsubsection{Standard constructions}
\label{StandardConstructions}
\paragraph{Free interpretation}
For any  set $S$ of algebraic symbols, there is a standard interpretation $\left( T^{S} ,  \omega_{S} \right)$ of the subset $S^{\geq} := \#^{-1} \left( \N \right) \subseteq S$ of algebraic symbols, with universe $ T^S = T^{S^{\geq}}$; just take
\begin{align}
\label{DefnFreeInterpretation}
\omega_{S} (s) := 
\begin{cases}
s & \overline{\#} \left( s \right) = 0
\\
\left( \nple{t}{\#\left( s \right)} \right) \mapsto 
s \circ t_{1} \circ \ldots \circ t_{\#\left( s \right)}
& \#(s) > 0
\end{cases}
\end{align}
The only checking to do, namely 
\begin{align*}
\bigcup_{s \in {\#}^{-1} (\Z^{+}) } \omega \left( s \right)\left(  {\left( T^{S} \right)}^{\#\left( s \right)} \right) \subseteq T^{S},
\end{align*}
is easy.
\begin{lemma}
The free interpretation is the only identical interpretation of $S^{\geq}$, in that it satisfies $\mathcal{I}\left( t \right) = t \ \forall t \in T^{S}$.
\end{lemma}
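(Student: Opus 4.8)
The plan is to prove the two assertions — that the free interpretation is \emph{identical} and that it is the \emph{only} identical interpretation of $S^{\geq}$ — separately, both by induction on the depth of terms (cfr. \ref{DefDepth}). Throughout I rely on the fact that the evaluation map $\mathcal{I}(\cdot) \colon T^S \to A$ attached to any interpretation $\mathcal{I} = (A, \omega)$ is fixed recursively by $\mathcal{I}(t) = \omega(t)$ when $t$ is atomic and $\mathcal{I}(s \circ t_1 \circ \cdots \circ t_{\#(s)}) = \omega(s)(\mathcal{I}(t_1), \ldots, \mathcal{I}(t_{\#(s)}))$ otherwise. That this recursion is well-posed is exactly what \ref{TermsUnambig} (unambiguity of each $T^S_n$) together with the partition \ref{TermsPartition} guarantee, since each non-atomic term decomposes in one and only one way as a head symbol followed by the correct number of strictly shallower subterms.

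For the identity claim I would induct on the depth $n$ of $t \in T^S$. In the base case $n = 0$ the term is atomic, so $\overline{\#}(t) = 0$ and \eqref{DefnFreeInterpretation} gives $\omega_S(t) = t$; hence $\mathcal{I}(t) = \omega_S(t) = t$. For the inductive step, a term of depth $n+1$ is by \ref{TermsPartition} uniquely of the form $t = s \circ t_1 \circ \cdots \circ t_{\#(s)}$ with $\#(s) > 0$ and each $t_j \in T^S_n$, whence
\begin{align*}
\mathcal{I}(t) = \omega_S(s)\left( \mathcal{I}(t_1), \ldots, \mathcal{I}(t_{\#(s)}) \right) = \omega_S(s)\left( t_1, \ldots, t_{\#(s)} \right) = s \circ t_1 \circ \cdots \circ t_{\#(s)} = t,
\end{align*}
the second equality being the inductive hypothesis and the third the defining clause of $\omega_S$ on function symbols in \eqref{DefnFreeInterpretation}.

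For uniqueness, suppose $\mathcal{J} = (T^S, \omega)$ is any interpretation of $S^{\geq}$ with $\mathcal{J}(t) = t$ for all $t \in T^S$; I want to read off that $\omega$ must coincide with $\omega_S$ on $S^{\geq}$. On a constant $s$ one has $\omega(s) = \mathcal{J}(s) = s = \omega_S(s)$ directly. On a function symbol $s$ with $\#(s) = k > 0$, for every tuple $(t_1, \ldots, t_k) \in (T^S)^k$ the identity hypothesis applied to the term $s \circ t_1 \circ \cdots \circ t_k$ yields $\omega(s)(t_1, \ldots, t_k) = \mathcal{J}(s \circ t_1 \circ \cdots \circ t_k) = s \circ t_1 \circ \cdots \circ t_k = \omega_S(s)(t_1, \ldots, t_k)$; since the domain of $\omega(s)$ is exactly $(T^S)^k$, this forces $\omega(s) = \omega_S(s)$, so $\omega$ and $\omega_S$ agree on all of $S^{\geq}$.

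I expect the only genuinely delicate point to be the well-posedness of the term-evaluation recursion used implicitly throughout, i.e.\ justifying that $\mathcal{I}(\cdot)$ is a single-valued function on $T^S$; once \ref{TermsUnambig} and \ref{TermsPartition} are invoked, the two inductions themselves are routine. The one bookkeeping subtlety in the uniqueness half is to keep the universe fixed at $T^S$, so that pinning down $\omega(s)$ on every tuple of terms really pins down the whole function $\omega(s)$.
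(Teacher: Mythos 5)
Your proof is correct; note that the paper states this lemma without any proof at all, so there is nothing to diverge from, and your double induction on depth (well-posedness of term evaluation via \ref{TermsUnambig} and \ref{TermsPartition}, then the identity claim, then uniqueness by reading $\omega$ off the evaluation map) is exactly the routine argument the author evidently had in mind. The one point worth making explicit, which you correctly flag yourself, is that the uniqueness assertion is only meaningful among interpretations whose universe is $T^S$ itself: an identical interpretation with a strictly larger universe $A \supsetneq T^S$ would have $\omega(s)$ defined on all of $A^{\#(s)}$ and hence would not be pinned down by the condition $\mathcal{I}(t)=t$ on terms alone, so the lemma should be read with that restriction understood.
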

\paragraph{Arbitrary extensions of the free interpretation}
\label{FreeInterprExt}
$(T^{S}, \omega_{S})$ can be extended to an interpretation of the whole $S$ with substantial arbitrarity by choosing $\omega_{S}\left( s \right)$ in ${\truthvalueset}^{{\left( T^{S} \right)}^{\left( -\#\left( s \right) \right)}}$ for each $s \in \#^{-1}\left( \Z^{-} \right)$. That is, given any mapping $f \colon \#^{-1}\left( \Z^{-} \right) \ni s \mapsto f\left( s \right) \in {\truthvalueset}^{{\left( T^{S} \right)}^{\left( -\#\left( s \right) \right)}}$, one can associate to it an interpretation of $(S, \#)$ by taking the free interpretation of $S^{\geq}$ and extending $\omega_{S}$ as just done, to a function we denote with $\omega_S^f$.
Note that giving such a $f$ is equivalent to giving a binary partition of the string set
\begin{align*}
\bigcup_{s \in {\#}^{-1} \left( \Z^- \right)} 
\circ \left( \left\{ s \right\} \cartprod {\left( T^S \right)}^{\left| \# \left( s \right) \right|} \right) \subset F_{0}^{S}
\end{align*}
\paragraph{Quotient interpretation}
Suppose to have an $S$-interpretation $\mathcal{I} = \left( A, \omega \right)$ and an equivalence relation $E$ (whose projector let be denoted by $ p_{E} \colon A \to A/E$) preserved through $\omega$, which means
\begin{gather}
\label{DefPreservation}
\forall s \in {\#}^{-1} \left( \Z \setminus \left\{ 0 \right\} \right)
\\
a_{j} E a_{j}', \   j=1, \ldots, \left| \#\left( s \right) \right| \Rightarrow
\omega \left( s \right) \left( \left( \nple{a}{ \left| \# \left( s \right) \right| } \right) \right) E' \omega \left( s \right) \left( \left( \nple{a'}{ \left| \# \left( s \right) \right| } \right) \right),
\nonumber
\end{gather}
where $E'$ is the equivalence relation on $A \disjcup \truthvalueset$ obtained by setting $E' := E \disjcup \left\{ \left( \false , \false \right) , \left( \true , \true \right) \right\}$.
This property permits building the projected function
\begin{align}
\label{QuotientInterpretation}
&{\frac{\omega}{E}} \left( s \right) :=
\nonumber
\\
&\begin{cases}
p_{E} (\omega(s)) & \overline{\#}(s)=0
\\
\left( p_{E}(a_1), \ldots, p_{E}(a_{\#(s)}) \right) \mapsto 
p_{E} \left( \omega(s) \left( \left( \nple{a}{\#(s)} \right) \right) \right)
& \#\left( s \right) > 0
\\
\left( p_{E}(a_1), \ldots, p_{E}(a_{\#(s)}) \right) \mapsto 
 \omega(s) \left( \left( \nple{a}{\#(s)} \right) \right) 
& \#\left( s \right) < 0
\end{cases}
\end{align}
It is obvious that $\frac{\mathcal{I}}{E} := \left( A/E, \omega / E \right)$ is still an $S$-interpretation.
\paragraph{Assignment mutation}
Denote with $\mathcal{I} \tfrac{\alpha}{}$ the interpretation $\left( A, \overline{\omega} \right)$ obtained from $\mathcal{I} = \left( A, \omega \right)$ by setting{}%
\footnote{Here $\omega$ and $\overline{\omega}$ are considered as relations, which permits seeing them as subsets of the cartesian product of domain with codomain.}
$ \overline{\omega}:= \left( \omega \left|_{S} \right. \right) \disjcup \left\{ \left( x_m , \alpha \left( x_m \right) \right), m \in \Z^+ \right\}$, where $\alpha \in A^X$.
Less formally, we keep the structure of the interpretation while changing the assignment of variables symbols.
The case of interest to us will be that of $\alpha$ equating $\omega \left|_{X}  \right.$ except for a finite subset $\left\{  x_{j_1}, \ldots, x_{j_n} \right\}$ of $X$, in which case we write
\begin{align*}
\mathcal{I}
\begin{aligned}
\alpha(x_{j_1}) && \ldots && \alpha \left( x_{j_n} \right)
\\
\hline
x_{j_1} && \ldots && x_{j_n}
\end{aligned}
:= \mathcal{I} \tfrac{\alpha}{}
\end{align*}
\subsection{Interpretation of non-atomic strings}
An interpretation assigns a meaning to atomic terms and to each operation symbol. Obviously this suffices to recursively assign a meaning (i.e. a value in the universe of the interpretation) to each term by revisiting what done in \ref{TermsAndFormulas} and calculating the value of a term from the value of its (lower-depth) subterms. 
\\
Once done this, one can reiterate an analogous process, and recursively assign a meaning (i.e. a value in $\truthvalueset$) to each formula by regarding it as made of boolean symbols and subformulas, and ultimately of boolean symbols and atomic formulas, and in turn the latter as made of relation symbols, $\equiv$'s and terms.
\\
Formally, this is done by extending $\omega$ of the given interpretation $\mathcal{I} := \left( A, \omega \right)$ first to all $T^S$ and then to all $F^S$, as follows. 
Before formalization, it should be noted that it is customary to abuse a bit the notation and use the interpretation symbol $\mathcal{I}$ altogether in place of the proper function symbol $\omega$, so one can write statements like, e.g., $\mathcal{I}\left( t \right) \in A$ and $\mathcal{I}\left( \varphi \right) = \true$.
\begin{defn}[Recursive interpretation of terms]
Define, for $t, \nple{t}{n} \in T_0^S$:
\begin{align*}
\mathcal{I}\left( t \right) := \omega \left( t \right) && 
\mathcal{I} \left( \left( \nple{t}{n} \right) \right):= \left( \mathcal{I}\left( t_1 \right), \ldots, \mathcal{I}\left( t_n \right) \right).
\end{align*}
Then recursively%
\footnote{See \ref{DefSubtermExtractor} for the definition of $\mathcal{S}$. Basically, it returns all subterms and subformulas of a given term/formula into a $n$-ple.}:
\begin{align*}
\mathcal{I}\left( t \right) := \omega \left( t\left( 1:1 \right) \right)
\left( \mathcal{I} \left( \mathcal{S} \left( t  \right) \right) \right)
&&
\mathcal{I} \left( \left( \nple{t}{n} \right) \right):= \left( \mathcal{I}\left( t_1 \right), \ldots, \mathcal{I}\left( t_n \right) \right).
\end{align*}
\end{defn}
\begin{defn}[Interpretation of atomic formulas]
\begin{align*}
\mathcal{I} \left( \varphi \right) :=
\begin{cases}
\omega \left( \varphi \left( 1:1 \right) \right) \left( \mathcal{I} \left( \mathcal{S} \left( \varphi \right) \right) \right)
& \text{ if } \varphi\left( 1:1 \right) \neq \equiv
\\
\true & \text{ if } \varphi = \equiv t_1 t_2 \text{ and } \mathcal{I}\left( t_1 \right) = \mathcal{I} \left( t_2 \right)
\\
\false & \text{ otherwise}
\end{cases}
\end{align*}
\end{defn}
\begin{defn}[Recursive interpretation of non-atomic formulas]
\begin{align*}
\mathcal{I} \left( \nor \varphi_1 \varphi_2 \right) := \true && 
\Leftrightarrow 
&& \mathcal{I} (\varphi_1) = \false \text{ and } \mathcal{I} (\varphi_2) = \false
\\
\mathcal{I} \left( \exists x_m \varphi \right) := \true && 
\Leftrightarrow && 
\mathcal{I} \tfrac{a}{x_m} (\varphi) = \true \text{ for some } a \in A
\end{align*}
\end{defn}
\begin{defn}[Satisfaction and model]
An interpretation $\mathcal{I}$ satisfies $\Phi \subseteq F^S$, otherwise said that $\mathcal{I}$ is a model of $\Phi$, iff $\mathcal{I} \left( \Phi \right)= \left\{ \true \right\}$. This is expressed in symbols as $ \mathcal{I} \models \Phi$. 
\end{defn}

\section{Table of notations and meta-symbols}

\begin{tabular}[c]{lr}
\hline
$\disjcup$ & disjoint union
\\
$\cartprod$ & cartesian product of two sets (infix)
\\
$\prod_{j=1}^n A_j$ & cartesian product of $n$ sets%
\footnotemark
\\
$A^n$ & shortcut for  $\prod_{j=1}^n A$
\\
$A^B$ & the family of functions mapping set $B$ into set $A$
\\
$\left. f \right|_A$ & restriction of $f$ to subdomain $A$
\\
$\powset{A}$ & the power set of $A$
\\
$s\left( 1:1 \right)$ & the first character of the string $s$
\\
$s\left( 2: \right)$ & the unique string such that $ s(1:1) s(2:) = s$
\\
$ \N $ & the non-negative integers 
\\
$\true, \false$ & semantic values of true and false for classical logic
\\
$S^+$ & the free semigroup generated by $S$,
\\
& i.e. the non-empty finite strings over the alphabet $S$
\\
\hline
\end{tabular}
\footnotetext{The two writings $\left( \prod_{j=1}^m A_j \right) \cartprod 
\left( \prod_{j=1}^n A_{m+j} \right)$ and $\prod_{j=1}^{m+n} A_j$ will denote the \emph{same} set. That is, cartesian product is considered associative, although formally this is a slight formal abuse. On the other hand, here the $A_j$'s will always be such as not to pose problems with this abuse.\\
As a convention,  $\prod_{j=1}^1 A_j$ is $A_1$ itself.
}

\rnote{Da rifare con bibtex}


\begin{thebibliography}{99}
\bibitem{ebbinghaus1994mathematical}
Heinz-Dieter Ebbinghaus, J\"org Flum, Wolfgang Thomas, Mathematical Logic, 1994 Springer
\bibitem{henkin1949completeness}
L. Henkin, The completeness of the first-order functional calculus, Journal of Symbolic Logic, vol 14, n 3, 159-166, 1949.
\bibitem{katz1994contemporary}
  Katz, R.H. and Borriello, G.,
  Contemporary logic design, 1994,Benjamin/Cummings Redwood City, CA
\bibitem{van1967frege}
Van Heijenoort, J.
From Frege to G\"odel: a source book in mathematical logic, 1879-1931; 1967,
 Harvard Univ Press

\bibitem{smullyan1995first}
Smullyan, R.M., First-order logic; 1995, Dover Publications

\bibitem{enderton1972mathematical}
  Enderton, H.B., A mathematical introduction to logic; 1972, Academic press New York
\end{thebibliography}
\end{document}